\documentclass[11pt,oneside,noshowframe]{imsart}

\usepackage[
  a4paper,
  left=30mm,
  right=30mm,
  top=28mm,
  bottom=28mm
]{geometry}

\usepackage{amssymb}
\usepackage{amsfonts,amsmath,amsthm,mathtools}
\usepackage{natbib}
\usepackage{xcolor}
\usepackage{graphicx}
\usepackage{epstopdf}
\usepackage[colorlinks=true,allcolors=blue]{hyperref}

\theoremstyle{plain}
\newtheorem{theorem}{Theorem}
\newtheorem{lemma}{Lemma}
\newtheorem{proposition}{Proposition}

\theoremstyle{definition}

\newtheorem{remark}{Remark}

\startlocaldefs

\newcommand{\EE}{\mathbb{E}}
\newcommand{\VV}{\mathbb{V}}
\newcommand{\bs}[1]{\boldsymbol{#1}}
\newcommand{\Pm}{\mathbf{P}}
\newcommand{\pv}{\mathbf{p}}
\newcommand{\Wm}{\mathbf{W}}
\newcommand{\Sm}{\mathbf{\Sigma}}
\newcommand{\Hm}{\mathbf{H}}
\newcommand{\vecc}{\textup{vec}}
\newcommand{\rvline}{\hspace*{-\arraycolsep}\vline\hspace*{-\arraycolsep}}

\DeclareMathOperator{\tr}{tr}
\DeclareMathOperator{\diag}{diag}
\DeclareMathOperator{\Cov}{Cov}

\endlocaldefs

\begin{document}

\begin{frontmatter}

\title{Matrix asymptotic calculus for plug-in maximum likelihood estimators in finite Markov chains}
\runtitle{Matrix asymptotic calculus for Markov-chain MLEs}
\runauthor{Gavrilopoulos, Trevezas and Votsi}

\begin{aug}

\author{Georgios Gavrilopoulos}
\address{Department of Mathematics, National and Kapodistrian University of Athens\\
Panepistimiopolis, Athens 15784, Greece\\
and Seminar for Statistics, Department of Mathematics, ETH Z\"urich\\
R\"amistrasse 101, Z\"urich 8092, Switzerland}
\ead[label=e0]{}

\author{\fnms{Samis} \snm{Trevezas}\thanksref{t1}}
\address{Department of Mathematics, National and Kapodistrian University of Athens\\
Panepistimiopolis, Athens 15784, Greece\\
and CentraleSup\'elec -- Universit\'e Paris-Saclay\\
3 Rue Joliot Curie, Gif-sur-Yvette 91190, France}
\ead[label=e1]{strevezas@math.uoa.gr}

\author{Irene Votsi}
\address{LIEC, CNRS, Universit\'e de Lorraine\\
8 rue Delestraint, Metz 57000, France}
\ead[label=e2]{}

\end{aug}

\thankstext{t1}{Corresponding author. \printead{e1}}

\begin{abstract}
We develop a matrix asymptotic calculus for plug-in maximum likelihood estimators of smooth characteristics of a finite-state Markov chain observed along a single trajectory. The classical vector central limit theorem for the transition-matrix estimator is represented by a centered Gaussian matrix whose covariance retains the row-stochastic constraints and possible zero transition probabilities. The functional delta method, finite-order Taylor developments and analytic expansions are then realized directly in matrix form. The resulting differential operator yields both the limiting object in its natural matrix, vector or scalar form and, after row-wise vectorization, its covariance matrix. This gives explicit joint distributions for transition probabilities over several steps, finite-dimensional curves of Markov characteristics, stationary and reliability quantities, additive-functional variances, entropy rates and Kemeny-type characteristics. The finite-order developments identify explicit curvature terms, whose Gaussian substitution yields curvature-refined approximations. Matrix differentiation avoids repeated coordinate-wise calculations, often reduces the computational burden, and leads directly to confidence intervals, confidence regions, simultaneous bands and Wald-type procedures.
\end{abstract}

\begin{keyword}[class=KWD]
\kwd{Markov chains}
\kwd{maximum likelihood estimation}
\kwd{Delta method}
\kwd{Gaussian random matrices}
\kwd{asymptotic theory}
\kwd{reliability indicators}
\kwd{non-parametric estimation}
\end{keyword}

\begin{keyword}[class=MSC]
\kwd[Primary ]{62M05}
\kwd[; secondary ]{60J10}
\kwd{62F12}
\kwd{62N05}
\end{keyword}

\end{frontmatter}

\section{Introduction}
\label{sec: introduction}

Non-parametric estimation in finite-state Markov models is a classical topic. The term \emph{non-parametric} means here that the transition probabilities are unknown, while the only structural restriction is that the transition matrix is stochastic. Throughout the paper the state space is $S=\{1,\ldots,s\}$ and the transition matrix is denoted by $\Pm=(p_{ij})_{i,j\in S}$.

Maximum likelihood estimation from one long trajectory observed until time $n$ goes back to \cite{anderson1957statistical} and \cite{billingsley1961}. If $N_i(n)$ is the number of one-step transitions departing from $i$ and $N_{ij}(n)$ is the number of transitions from $i$ to $j$, then the non-parametric MLE is
\begin{equation*}
\widehat p_{ij}(n)=\frac{N_{ij}(n)}{N_i(n)},\qquad i,j\in S,
\end{equation*}
with the usual convention to be null on rows not yet visited. Under irreducibility, this convention is irrelevant asymptotically. The invariance principle of maximum likelihood estimation, in the sense of induced likelihood function as in \cite{zehna1966invariance}, then gives plug-in MLEs for any Markov characteristic which can be expressed as a functional of $\Pm$.

Finite-dimensional parameter estimation for Markov chains and Markov processes has been studied in several directions. For discrete-time Markov chains with continuous state space, one-step MLEs and MLE processes are treated in \cite{VarakinVeretennikov2002,KutoyantsMotrunich2016}. In the context of continuous-time Markov and hidden Markov processes, likelihood methods and related asymptotic problems are discussed in \cite{Chigansky2009,Kutoyants2025}. In finite state spaces, the transition-matrix MLE has a particularly transparent form, but the asymptotic analysis of functionals of $\Pm$ is often carried out after choosing a minimal parametrization. Usually one removes one entry per row and works in a subset of $\mathbb R^{s^2-s}$. This is natural for likelihood differentiation, but it hides the matrix structure of the problem and creates asymmetric formulas for functionals which are intrinsically expressed in terms of $\Pm$.

There is also a boundary issue. Under the true parameter, the chain may be irreducible while some transition probabilities are equal to zero. Then the true parameter lies on the boundary of a reduced Euclidean parametrization, although the matrix functional itself may be regular along the directions selected by the estimator. The first-order fluctuations of the MLE satisfy the row-sum constraints automatically and vanish on entries corresponding to zero transition probabilities. Thus the constraints need not be eliminated before differentiation; they may instead be kept in the limiting Gaussian matrix.

This observation is the starting point of the paper. The usual vector central limit theorem for the transition-matrix MLE is written as a convergence theorem for a centered Gaussian matrix $\Wm_{\!\Pm}$. We use Gaussian random matrices in this elementary vector-valued sense; for background on matrix-variate distributions see, for instance, \cite{gupta2018matrix}. The covariance is retained in its full, generally non-separable, form. The limiting matrix plays for asymptotic distributions the same structural role that $\Pm$ plays for deterministic Markov characteristics. If a characteristic is written as a matrix, vector or scalar functional of $\Pm$, its limiting distribution is obtained by applying the corresponding differential to $\Wm_{\!\Pm}$.

The resulting calculus has two complementary forms. First, it gives the limiting object in the natural form of the characteristic itself: a matrix for transition probabilities over several steps, a vector for a finite-dimensional curve, or a scalar for a reliability index, a long-run variance or an information-theoretic characteristic. Second, if a covariance matrix in vector form is needed, it is obtained afterwards by row-wise vectorization and Kronecker algebra. Thus the matrix representation and the vector representation are two forms of the same expansion. This is the source of the computational simplification: one differentiates once at the matrix level and then vectorizes only the resulting linear operator.

The first-order transfer principle used below is the functional delta method for maps defined on subsets of normed spaces, with tangential set given here by $\mathcal T_{\Pm}$; see \cite[Theorem~20.8]{vaart_1998} and \cite{van1994weak}. The contribution of the present work is therefore not a new abstract delta method. It is the explicit realization of that principle for the non-parametric transition-matrix MLE, together with the finite-order matrix developments and the differential operators required for Markov characteristics. The construction retains the limiting random element as a Gaussian matrix with its full covariance, gives the corresponding row-wise vector covariance by a single operator identity, and provides joint distributions and inferential procedures without separate coordinate-wise derivations.

Kronecker products have long been used in the analysis of Markov chains and multidimensional Markovian systems; see, for instance, \cite{dayar12} and \cite{dayar19}. Kronecker, modular and hierarchical representations of stochastic automata networks and Markov processes were discussed in \cite{buc2004}. \citet{Ang2021} used phase-type distributions and Kronecker algebra to describe the execution of networks of activities modelled by Markov chains. For a survey of continuous-time Markov-chain theory and developments based on matrix analysis, we refer to \cite{Le2022}. In the present paper, Kronecker products are used in a different way: they represent the differential operators which transform the Gaussian limit of the transition-matrix MLE into the Gaussian limits of plug-in estimators. This viewpoint turns several calculations, formerly derived in problem-specific form, into instances of a common asymptotic calculus.

The applications to reliability provide a representative example. Markov and semi-Markov models are widely used for reliability analysis; see \cite{votsi2019,votsi_brouste2019} and the references therein. In the Markov case, quantities such as availability, reliability, failure-time probabilities, mean time to failure, mean time to repair and transition intensities from working to failed states are functionals of the transition matrix. Element-wise asymptotic formulas for some of these quantities were obtained in \cite{sadek_limnios}. The present framework recovers such formulas from a single operator identity and extends them to joint limits of finite-dimensional curves and to further functionals. Variance representations for additive functionals of Markov chains, in the spirit of the Markov-chain central limit theorem and related formulas such as those in \cite{trevezas2009variance}, also fit naturally into the same scheme.

Beyond first-order normal approximations, the matrix calculus gives finite-order stochastic developments. The quadratic term identifies the curvature contribution of the functional at order $n^{-1}$. Replacing the normalized transition-matrix estimator in this term by its Gaussian limit yields the curvature-refined Gaussian approximation examined below. This substitution is not, by itself, a second-order approximation theorem for the distribution; such a conclusion would require an approximation with an explicit rate. A complete order $n^{-1}$ mean correction would also involve the order $n^{-1}$ bias of the transition-matrix MLE.

The paper is organized as follows. Section \ref{sec: prelims} states the classical limit theorem for the transition-matrix MLE, gives its matrix representation, and identifies the linear directions determined by the stochastic constraints. Section \ref{sec: matrix_calculus} develops the matrix-level asymptotic calculus. Its first-order part is related explicitly to the functional delta method, while its finite-order and analytic parts give stochastic developments in the matrix space and the associated covariance operators. Section \ref{sec: polynomial_inverse} treats polynomial and inverse-type functionals, finite-dimensional curves of Markov characteristics, stationary probabilities, additive-functional variances, entropy rates and Kemeny-type characteristics. Section \ref{sec: reliability} derives reliability quantities as explicit instances of the same calculus. Section \ref{sec: numerical} contains numerical illustrations of pointwise inference for availability, simultaneous confidence bands for an availability curve, the limiting normal law of the MTTF estimator, and a curvature-refined Gaussian approximation for the MTTF estimator. Section \ref{sec: discussion} discusses extensions and perspectives. The Kronecker and vectorization identities used in the derivations are recalled in Appendix \ref{app:kronecker}.

\section{Preliminaries}
\label{sec: prelims}

We collect here the probabilistic limit theorem and the geometric notation that will be used throughout the paper. The elementary facts on Kronecker products and vectorization are given in Appendix \ref{app:kronecker}. In the sequel, $\mathbf 1_s=(1,\ldots,1)^\top$ and, for a matrix $\mathbf M$, the vector $\vecc(\mathbf M)$ is obtained by stacking its columns. Since transition matrices are naturally treated row by row, we use the row-wise vectorization
\begin{equation*}
\pv=\vecc(\Pm^\top),
\qquad
\widehat\pv(n)=\vecc\{\widehat\Pm(n)^\top\}.
\end{equation*}
The symbol $\mathbf A$ denotes the $s\times s$ matrix whose entries are all equal to one. We denote by
\begin{equation*}
\mathcal P_{\rm st}=
\left\{\mathbf Q\in\mathbb R^{s\times s}:
\mathbf Q\mathbf 1_s=\mathbf 1_s,\ q_{ij}\geq0\right\}
\end{equation*}
the set of stochastic matrices, and by $\mathcal P_{\rm ir}$ its subset of irreducible stochastic matrices.

\medskip
\noindent\textbf{The classical limit theorem.}
Assume that the true transition matrix $\Pm$ is irreducible, with stationary row vector $\bs\pi=(\pi_1,\ldots,\pi_s)$. The following result is the classical asymptotic normality of the non-parametric MLE of the transition probabilities, written in the row-wise notation used in this paper.

\begin{theorem}
\label{theo:classical_mle_clt}
Let $\Pm\in\mathcal P_{\rm ir}$. Then $\widehat\Pm(n)$ is strongly consistent and
\begin{equation}
\label{eq: MLE_convergence_vec}
\sqrt n\left\{\widehat\pv(n)-\pv\right\}
\xrightarrow[n\rightarrow\infty]{d}
\Wm_{\!\pv}
\sim
\mathcal N_{s^2}(\mathbf 0_{s^2},\Sm_{\pv}),
\end{equation}
where
\begin{equation}
\label{eq:asymptotic_cov}
\Sm_{\pv}=
\begin{pmatrix}
\pi_1^{-1}\mathbf\Lambda_1 & \mathbf 0 & \cdots & \mathbf 0\\
\mathbf 0 & \pi_2^{-1}\mathbf\Lambda_2 & \cdots & \mathbf 0\\
\vdots & \vdots & \ddots & \vdots\\
\mathbf 0 & \mathbf 0 & \cdots & \pi_s^{-1}\mathbf\Lambda_s
\end{pmatrix},
\end{equation}
and
\begin{equation*}
\mathbf\Lambda_i(j,k)=p_{ij}(\delta_{jk}-p_{ik}),
\qquad j,k=1,\ldots,s.
\end{equation*}
Equivalently,
\begin{equation}
\label{eq:mle_d_matrix}
\sqrt n\left\{\widehat\Pm(n)-\Pm\right\}
\xrightarrow[n\rightarrow\infty]{d}
\Wm_{\!\Pm},
\qquad
\vecc(\Wm_{\!\Pm}^{\top})=\Wm_{\!\pv},
\end{equation}
where $\Wm_{\!\Pm}$ is a centered Gaussian matrix characterized by
\begin{equation}
\label{eq:full_cov_tensor}
\operatorname{Cov}(W_{ij},W_{kl})=
\delta_{ik}\frac{1}{\pi_i}p_{ij}(\delta_{jl}-p_{il}),
\qquad i,j,k,l=1,\ldots,s.
\end{equation}
\end{theorem}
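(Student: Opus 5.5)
The plan is to prove consistency and asymptotic normality through a martingale representation of the transition counts. The matrix statement then follows by reshaping the vector limit.

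\textbf{Consistency.} Write $N_i(n)=\sum_{m=0}^{n-1}\mathbf 1\{X_m=i\}$ and $N_{ij}(n)=\sum_{m=0}^{n-1}\mathbf 1\{X_m=i,X_{m+1}=j\}$. Since $\Pm$ is irreducible on a finite state space, the ergodic theorem gives $N_i(n)/n\to\pi_i>0$ almost surely. Applied to the pair chain $(X_m,X_{m+1})$, which is also irreducible on its support, it gives $N_{ij}(n)/n\to\pi_ip_{ij}$ almost surely. Because $\pi_i>0$, eventually every row has been visited, so the null-row convention is irrelevant and $\widehat p_{ij}(n)\to p_{ij}$ almost surely.

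\textbf{Martingale representation.} Set $\xi_{m+1}(i,j)=\mathbf 1\{X_m=i\}\bigl(\mathbf 1\{X_{m+1}=j\}-p_{ij}\bigr)$ and collect these into the vector $\bs\xi_{m+1}\in\mathbb R^{s^2}$ in row-wise order. Then $M_n=\sum_{m<n}\bs\xi_{m+1}$ is a martingale with bounded increments with respect to the natural filtration, and
\begin{equation*}
\widehat p_{ij}(n)-p_{ij}=\frac{N_{ij}(n)-p_{ij}N_i(n)}{N_i(n)}=\frac{M_n(i,j)}{N_i(n)}.
\end{equation*}
The conditional covariance of the increments is
\begin{equation*}
\EE[\xi_{m+1}(i,j)\xi_{m+1}(k,l)\mid X_m]=\delta_{ik}\mathbf 1\{X_m=i\}\,p_{ij}(\delta_{jl}-p_{il}).
\end{equation*}
Summing over $m$ and dividing by $n$, the ergodic theorem shows that this quadratic characteristic converges almost surely to the block-diagonal matrix with blocks $\pi_i\mathbf\Lambda_i$.

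\textbf{Martingale CLT.} The increments are bounded, so the conditional Lindeberg condition holds trivially. The martingale central limit theorem, applied to the vector martingale via the Cramér--Wold device, then gives
\begin{equation*}
n^{-1/2}M_n\xrightarrow{d}\mathcal N\bigl(\mathbf 0,\diag(\pi_i\mathbf\Lambda_i)\bigr).
\end{equation*}
Note that $\bs\xi_{m+1}$ and the ergodic averages need not start from stationarity. Irreducibility alone suffices for the ergodic theorem on a finite state space, and the martingale CLT only needs the convergence in probability of the normalized bracket. I expect this step, checking the hypotheses of the martingale CLT, to be the main technical point. It is nevertheless standard; the theorem is classical (Billingsley 1961), so one may also simply cite it.

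\textbf{Slutsky step.} We have
\begin{equation*}
\sqrt n(\widehat\pv(n)-\pv)=\mathbf D_n\,n^{-1/2}M_n,\qquad \mathbf D_n=\diag\bigl(n/N_i(n)\bigr)\otimes\mathbf I_s\to\diag(\pi_i^{-1})\otimes\mathbf I_s \text{ almost surely}.
\end{equation*}
Slutsky's lemma then yields a Gaussian limit with covariance whose $i$-th diagonal block is $\pi_i^{-1}\pi_i\mathbf\Lambda_i\pi_i^{-1}=\pi_i^{-1}\mathbf\Lambda_i$. This is exactly \eqref{eq:asymptotic_cov}.

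\textbf{Matrix form.} The map $\mathbf M\mapsto\vecc(\mathbf M^\top)$ is a linear isometric bijection $\mathbb R^{s\times s}\to\mathbb R^{s^2}$. Its inverse is continuous, so the continuous mapping theorem turns \eqref{eq: MLE_convergence_vec} into \eqref{eq:mle_d_matrix}, with $\Wm_{\!\Pm}$ defined by reshaping $\Wm_{\!\pv}$. The entry of $\Sm_{\pv}$ indexed by $((i,j),(k,l))$ is precisely \eqref{eq:full_cov_tensor}, which characterizes the centered Gaussian matrix.
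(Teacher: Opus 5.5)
Your argument is correct and takes a different route from the paper. The paper gives no formal proof: it cites the result as classical and sketches the usual derivation. That sketch works row by row in a minimal parametrization, with one probability removed per row. It uses the multinomial covariance of the counts out of $i$ conditionally on $N_i(n)$, replaces $N_i(n)/n$ by $\pi_i$, and asserts that distinct rows are asymptotically uncorrelated. Finally it restores the removed coordinate to recover the singular block $\pi_i^{-1}\mathbf\Lambda_i$ and the relation $\Wm_{\!\Pm}\mathbf 1_s=\mathbf 0_s$.

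You instead stay in all $s^2$ coordinates. You write $N_{ij}(n)-p_{ij}N_i(n)$ as a martingale with bounded increments, identify its bracket by the ergodic theorem, and finish with the martingale CLT and Slutsky.

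\begin{itemize}
\item \textbf{Self-contained rigour.} The conditional-multinomial step of the sketch is only heuristic as stated. $N_i(n)$ is random and depends on the very transitions being counted, for example through self-transitions $i\to i$. Making it rigorous requires Billingsley's construction with i.i.d.\ exit sequences from each state, together with a random-index CLT. Your martingale argument avoids this entirely.
\item \textbf{Block-diagonal structure.} In your version it is automatic from the factor $\delta_{ik}$ in the conditional covariance.
\item \textbf{Degenerate covariance.} It needs no special care, since the martingale CLT allows a singular limit.
\item \textbf{Constraints.} $\Wm_{\!\Pm}\mathbf 1_s=\mathbf 0_s$ and $W_{ij}=0$ for $p_{ij}=0$ are inherited from $\sum_j\xi_{m+1}(i,j)=0$ and from $\xi_{m+1}(i,j)=0$ almost surely when $p_{ij}=0$. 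This matches the paper's own philosophy of keeping the stochastic constraints inside the limit rather than eliminating them first.
\end{itemize}

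The paper's sketch, in turn, ties the result to the reduced-parametrization likelihood viewpoint of the classical literature.

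One detail you leave implicit: $\mathbf D_n$ and the identity $\sqrt n\{\widehat\pv(n)-\pv\}=\mathbf D_n n^{-1/2}M_n$ hold only on $\{N_i(n)>0,\ i=1,\ldots,s\}$. This event occurs eventually almost surely, so it does not affect the limit.
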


The vector formulation of Theorem \ref{theo:classical_mle_clt} is usually obtained by working row by row with a minimal parametrization. For a fixed row $i$, one may remove one transition probability and use the remaining probabilities as free coordinates. Conditionally on the number of visits to $i$, the transition counts from $i$ have the usual multinomial covariance structure. Since $N_i(n)/n$ converges almost surely to $\pi_i$, the limiting covariance of the $i$-th row is $\pi_i^{-1}\mathbf\Lambda_i$. Different rows are asymptotically uncorrelated, which gives the block diagonal matrix in \eqref{eq:asymptotic_cov}. Restoring the removed coordinate gives the full row covariance and also gives the row-sum relation
\begin{equation}
\label{eq:WP_rowsum_zero}
\Wm_{\!\Pm}\mathbf 1_s=\mathbf 0_s
\quad\text{almost surely}.
\end{equation}
Moreover, if $p_{ij}=0$, then $W_{ij}=0$ almost surely. These are not extra assumptions; they are contained in the covariance matrix \eqref{eq:asymptotic_cov}.

The matrix in \eqref{eq:mle_d_matrix} will be used as the limiting Gaussian matrix from which the following asymptotic representations are obtained. Its covariance is kept in full form. In general, a separable representation $\Sm_{\pv}=\mathbf U\otimes\mathbf V$ is not available: it would force all non-zero row covariance blocks $\pi_i^{-1}\mathbf\Lambda_i$ to be proportional to one common matrix. For a general finite Markov chain with $s\geq3$, these blocks need not be proportional.

\medskip
\noindent\textbf{A motivating two-state calculation.}
Let
\begin{equation*}
\Pm=
\begin{pmatrix}
1-p&p\\
q&1-q
\end{pmatrix},
\qquad p,q\in[0,1].
\end{equation*}
Then
\begin{equation*}
\Pm^2=
\begin{pmatrix}
(1-p)^2+pq&p(2-p-q)\\
q(2-p-q)&(1-q)^2+pq
\end{pmatrix}.
\end{equation*}
A minimal parametrization uses $(p,q)$. The corresponding delta-method calculation differentiates the four entries of $\Pm^2$ with respect to these two coordinates. On the other hand, the matrix map $\Phi(\mathbf M)=\mathbf M^2$ has derivative
\begin{equation*}
\Phi'_{\Pm}(\mathbf H)=\Pm\mathbf H+\mathbf H\Pm.
\end{equation*}
The two free limiting coordinates, corresponding to $p_{12}$ and $p_{21}$, are completed into a matrix direction by the row-sum constraints. More precisely, if $X$ and $Y$ denote the limiting coordinates associated with $p_{12}$ and $p_{21}$, respectively, then the corresponding matrix direction is
\begin{equation*}
\mathbf H=
\begin{pmatrix}
-X&X\\
Y&-Y
\end{pmatrix},
\end{equation*}
and therefore $\mathbf H\mathbf{1}_2=\mathbf{0}_2$. Substitution into $\Phi'_{\Pm}$ gives the same linear transformation of $(X,Y)$ as the derivative in the reduced parametrization. Thus the equality of the two first-order laws is not an accident of dimension two. The row-stochastic restrictions are already carried by the limiting matrix direction, and the derivative may be computed in the full matrix space before being evaluated on the tangent directions generated by the estimator.
\medskip

\noindent\textbf{Tangent directions determined by stochasticity.}
The first-order directions compatible with the row-stochastic constraints and the zero entries of $\Pm$ form the tangent space
\begin{equation*}
\mathcal T_{\Pm}=
\{\mathbf H\in\mathbb R^{s\times s}:\mathbf H\mathbf{1}_s=\mathbf{0}_s,
\ h_{ij}=0\ \text{whenever}\ p_{ij}=0\}.
\end{equation*}
By \eqref{eq:WP_rowsum_zero} and the zero-variance entries mentioned above,
\begin{equation}
\label{eq:WP_tangent}
\Wm_{\!\Pm}\in\mathcal T_{\Pm}
\quad\text{almost surely}.
\end{equation}

\medskip
\noindent\textbf{Open classes of matrices.}
The differentiability arguments below are local. It is therefore useful to know that the main classes of transition matrices remain stable under small stochastic variations.

\begin{proposition}
\label{lem:topology_classes}
Let $\mathcal M_s=\mathbb R^{s\times s}$ and let $\mathcal P_{\rm 1p}$ be the subset of stochastic matrices with one positive recurrent class and all remaining states transient. Then $\mathcal P_{\rm 1p}$ and $\mathcal P_{\rm ir}$ are open subsets of $\mathcal P_{\rm st}$ for the relative topology.
\end{proposition}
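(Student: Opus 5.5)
Both openness statements rest on one observation about supports. For $\mathbf Q\in\mathcal P_{\rm st}$, the communication structure, and hence the class structure of the chain, depends only on the directed graph $G(\mathbf Q)$ with an edge $i\to j$ whenever $q_{ij}>0$. If $\mathbf Q\in\mathcal P_{\rm st}$ satisfies $\max_{i,j}|q_{ij}-p_{ij}|<\varepsilon$, where
\begin{equation*}
\varepsilon=\min\{p_{ij}:p_{ij}>0\},
\end{equation*}
then $p_{ij}>0$ forces $q_{ij}>p_{ij}-\varepsilon\ge 0$. So every edge of $G(\Pm)$ is an edge of $G(\mathbf Q)$. The neighbourhood
\begin{equation*}
U=\Bigl\{\mathbf Q\in\mathcal P_{\rm st}:\max_{i,j}|q_{ij}-p_{ij}|<\varepsilon\Bigr\}
\end{equation*}
is relatively open in $\mathcal P_{\rm st}$, and I will show that it lies in the relevant class. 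Note that $\mathbf Q$ may acquire new edges, and the argument has to tolerate this.

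\emph{Irreducible case.} Take $\Pm\in\mathcal P_{\rm ir}$. Then $G(\Pm)$ is strongly connected. Adding edges preserves strong connectivity, so $G(\mathbf Q)$ is strongly connected for every $\mathbf Q\in U$. Hence $U\subset\mathcal P_{\rm ir}$, and this part is immediate.

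\emph{One recurrent class.} Take $\Pm\in\mathcal P_{\rm 1p}$, with recurrent class $C$ and transient set $T=S\setminus C$. On a finite state space, $\mathcal P_{\rm 1p}$ is characterized as follows: there exists a state $c$ that is reachable in the graph from every state. This characterization is the key step.
\begin{itemize}
\item If there is exactly one closed class $C$, then every state reaches some closed class, hence reaches $C$, so any $c\in C$ works.
\item Conversely, suppose such a $c$ exists. Every closed class must contain $c$, so there is exactly one closed class, namely the class of $c$. On a finite space, closed classes are exactly the positive recurrent classes, and every state outside them is transient.
\end{itemize}
With this characterization, openness follows in the same way as before. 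Reachability of $c$ from every state in $G(\Pm)$ is witnessed by paths whose edges persist in $G(\mathbf Q)$ for $\mathbf Q\in U$. Hence $U\subset\mathcal P_{\rm 1p}$.

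The main obstacle is exactly this second case. Added edges could, in principle, merge classes or change which class is closed. The naive argument "$C$ stays closed" fails, because $\mathbf Q$ may have new edges leaving $C$. The plan avoids this by using the reachability characterization, which is monotone under adding edges, in place of closedness of a particular class. I will check that characterization carefully: every state in a finite chain reaches a closed class, and closed communicating classes in finite chains are positive recurrent.
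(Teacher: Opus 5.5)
Your argument is correct. For $\mathcal P_{\rm ir}$ it is the paper's argument: positive entries along connecting paths survive small stochastic perturbations. You simply take the minimum over all positive entries of $\Pm$ instead of over those on selected paths.

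For $\mathcal P_{\rm 1p}$ you take a different route. The paper uses the algebraic characterization ``$\Pm\in\mathcal P_{\rm 1p}$ iff $\mathbf I_s-\Pm+\mathbf A$ is non-singular'', cited from Resnick, and concludes by continuity of the determinant. You replace it by the combinatorial characterization ``some state is reachable from every state''. That property is stable under adding edges, so the same support-persistence neighbourhood handles both classes. Your two bullets prove the characterization correctly, using that every state of a finite chain reaches a closed class and that the closed classes of a finite chain are exactly its positive recurrent classes.

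Your route gives a self-contained, elementary proof with an explicit radius $\min\{p_{ij}:p_{ij}>0\}$. It also rests on a single principle: any class of stochastic matrices defined by a graph property preserved under adding edges is relatively open in $\mathcal P_{\rm st}$.

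The paper's route delegates the class counting to the fact that the left eigenspace of $\Pm$ for eigenvalue one has dimension equal to the number of closed classes. In exchange, it shows that the defining condition is open in all of $\mathbb R^{s\times s}$, not only relative to $\mathcal P_{\rm st}$. It is also exactly the condition under which the extension $\mathbf M\mapsto\mathbf 1_s^\top(\mathbf I_s-\mathbf M+\mathbf A)^{-1}$ of the stationary vector is defined and analytic on an open neighbourhood $\mathcal U$, as Theorem~\ref{theo:matrix_space_delta} requires.
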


\begin{proof}
For $\mathcal P_{\rm 1p}$, a standard characterization is the non-singularity of $\mathbf I_s-\Pm+\mathbf A$; see \cite[Proposition 2.14.1]{resnick1992adventures} and \cite{trevezas2009variance}. Since the determinant is continuous, this non-singularity is preserved under sufficiently small stochastic variations within $\mathcal P_{\rm st}$.

For irreducibility, fix $\Pm\in\mathcal P_{\rm ir}$. For every ordered pair $(i,j)$, choose a path from $i$ to $j$ whose transition probabilities are strictly positive. There are finitely many such paths, and hence the minimum of the positive transition probabilities appearing in the selected paths is strictly positive. A sufficiently small stochastic variation within $\mathcal P_{\rm st}$ keeps all these selected transitions positive. The selected paths are therefore preserved, and the varied stochastic matrix remains irreducible.
\end{proof}

All strong consistency assertions in the sequel follow from the strong consistency in Theorem \ref{theo:classical_mle_clt} and the continuous mapping theorem. We shall state strong consistency together with asymptotic normality for the relevant plug-in estimators, without repeating this argument in every proof.


\section{Matrix and analytic stochastic calculus for Markov functionals}
\label{sec: matrix_calculus}

We now pass from the transition-matrix MLE to general Markov characteristics. The limiting Gaussian matrix $\Wm_{\!\Pm}$ plays, for the limiting laws, the same structural role as $\Pm$ plays for the deterministic characteristics. If a characteristic is written as a matrix-valued, vector-valued or scalar-valued functional of $\Pm$, then its limiting object is obtained by applying the corresponding differential to $\Wm_{\!\Pm}$. The row-stochastic restrictions are not ignored in this operation. They are already reflected in the fact that the limiting directions belong to the linear space $\mathcal T_{\Pm}$ introduced in Section \ref{sec: prelims}. 

Let $\mathcal Y$ be a finite-dimensional normed space. A map $\Phi:\mathcal U\subset\mathbb R^{s\times s}\to\mathcal Y$ is Fr\'echet differentiable at $\Pm$ if there exists a linear map $\Phi'_{\Pm}$ such that
\begin{equation*}
\lim_{\lVert\mathbf H\rVert\to0}
\frac{\lVert \Phi(\Pm+\mathbf H)-\Phi(\Pm)-\Phi'_{\Pm}(\mathbf H)\rVert}
{\lVert\mathbf H\rVert}=0.
\end{equation*}
For notational uniformity, we write 
$\Phi_{\Pm}^{(1)}=\Phi'_{\Pm}$.
For $j\geq2$, the $j$-th Fr\'echet differential is denoted by
$\Phi_{\Pm}^{(j)}(\mathbf H_1,\ldots,\mathbf H_j)$. 
When all arguments coincide, we write $\Phi_{\Pm}^{(j)}(\mathbf H,\ldots,\mathbf H)$.

The following theorem collects the transfer principles used throughout the paper. Its first-order assertion is the functional delta method, specialized to the tangential set $\mathcal T_{\Pm}$. The finite-order and analytic assertions follow from Taylor expansion in the finite-dimensional matrix space. The theorem places these principles in a common form from which the explicit Markov-matrix representations, covariance operators and inferential results of the subsequent sections are obtained. The differentiations are performed in $\mathbb R^{s\times s}$, while their values are taken on the support-preserving directions generated by the transition-matrix estimator.

\begin{theorem}
\label{theo:matrix_space_delta}
Let $\Pm\in\mathcal P_{\rm ir}$ and let $\mathcal U\subset\mathbb R^{s\times s}$ be an open neighbourhood of $\Pm$. Let
\begin{equation*}
\phi:\mathcal U\cap\mathcal P_{\rm st}\longrightarrow\mathcal Y
\end{equation*}
be a functional defined on stochastic matrices, where $\mathcal Y$ is a finite-dimensional normed space. Suppose that there exists a map
\begin{equation*}
\Phi:\mathcal U\longrightarrow\mathcal Y
\end{equation*}
such that
\begin{equation*}
\Phi(\mathbf Q)=\phi(\mathbf Q)
\end{equation*}
for every $\mathbf Q\in\mathcal U\cap\mathcal P_{\rm st}$. Set
\begin{equation}
\label{eq:def_Wn}
\Wm_n=\sqrt n\{\widehat\Pm(n)-\Pm\}.
\end{equation}
If $\Phi$ is continuous at $\Pm$, then
\begin{equation}
\label{eq:central_consistency}
\phi(\widehat\Pm(n))\longrightarrow\phi(\Pm)
\quad\text{a.s.}
\end{equation}
where the plug-in estimator may be defined arbitrarily before all rows have been observed. In particular, \eqref{eq:central_consistency} holds under each of the differentiability assumptions below. The following assertions also hold.
\begin{itemize}
\item[(i)] If $\Phi$ is Fr\'{e}chet differentiable at $\Pm$, then
\begin{equation}
\label{eq:central_first_order}
\sqrt n\{\phi(\widehat\Pm(n))-\phi(\Pm)\}
\xrightarrow[n\rightarrow\infty]{d}
\Phi'_{\Pm}(\Wm_{\!\Pm}).
\end{equation}
\item[(ii)] If $\Phi$ is $r$ times continuously Fr\'{e}chet differentiable on $\mathcal U$, with $r\geq2$, then
\begin{equation}
\label{eq:central_finite_expansion}
\phi(\widehat\Pm(n))
=
\phi(\Pm)
+
\sum_{j=1}^{r}
\frac{1}{j!\,n^{j/2}}
\Phi_{\Pm}^{(j)}(\Wm_n,\ldots,\Wm_n)
+
o_p(n^{-r/2}).
\end{equation}
Moreover, for every $m=1,\ldots,r$,
\begin{align}
& n^{m/2}
\left\{
\phi(\widehat\Pm(n))-\phi(\Pm)
-
\sum_{j=1}^{m-1}
\frac{1}{j!\,n^{j/2}}
\Phi_{\Pm}^{(j)}(\Wm_n,\ldots,\Wm_n)
\right\}
\nonumber\\
&\hspace{4cm}
\xrightarrow[n\rightarrow\infty]{d}
\frac1{m!}\Phi_{\Pm}^{(m)}(\Wm_{\!\Pm},\ldots,\Wm_{\!\Pm}).
\label{eq:central_m_order_limit}
\end{align}
where the sum is empty when $m=1$.
\item[(iii)] If $\Phi$ is real analytic in a neighbourhood of $\Pm$ contained in $\mathcal U$, then there exists an open neighbourhood $\mathcal V\subset\mathcal U$ of $\Pm$ such that, on the event $\{\widehat\Pm(n)\in\mathcal V\cap\mathcal P_{\rm st}\}$,
\begin{equation}
\label{eq:central_analytic_series}
\phi(\widehat\Pm(n))
=
\phi(\Pm)
+
\sum_{j\geq1}
\frac{1}{j!\,n^{j/2}}
\Phi_{\Pm}^{(j)}(\Wm_n,\ldots,\Wm_n),
\end{equation}
where the series converges absolutely. The probability of this event tends to one. Consequently, for every fixed $r\geq1$,
\begin{equation}
\label{eq:central_analytic_truncation}
\phi(\widehat\Pm(n))
=
\phi(\Pm)
+
\sum_{j=1}^{r}
\frac{1}{j!\,n^{j/2}}
\Phi_{\Pm}^{(j)}(\Wm_n,\ldots,\Wm_n)
+
O_p(n^{-(r+1)/2}).
\end{equation}
\item[(iv)] If, for some integer $r\geq1$, two $C^r$ maps $\Phi_1$ and $\Phi_2$ represent the same stochastic functional $\phi$ on $\mathcal U\cap\mathcal P_{\rm st}$, then, for every $j=1,\ldots,r$,
\begin{equation}
\label{eq:central_invariance}
\Phi_{1,\Pm}^{(j)}(\mathbf H_1,\ldots,\mathbf H_j)
=
\Phi_{2,\Pm}^{(j)}(\mathbf H_1,\ldots,\mathbf H_j),
\qquad
\mathbf H_1,\ldots,\mathbf H_j\in\mathcal T_{\Pm}.
\end{equation}
Consequently, the terms in \eqref{eq:central_first_order}--\eqref{eq:central_analytic_truncation}, whenever evaluated on $\Wm_n$ or $\Wm_{\!\Pm}$, do not depend on the chosen differentiable matrix representation of $\phi$.
\end{itemize}
\end{theorem}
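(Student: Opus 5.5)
The proof reduces every assertion to a deterministic Taylor statement about $\Phi$ at $\Pm$, evaluated along the random directions $\Wm_n$. Two facts from Theorem \ref{theo:classical_mle_clt} drive everything. First, $\widehat\Pm(n)\to\Pm$ almost surely and $\Wm_n\xrightarrow{d}\Wm_{\!\Pm}$, hence $\Wm_n=O_p(1)$. Second, $\widehat\Pm(n)\in\mathcal P_{\rm st}$ as soon as every row has been visited. By irreducibility and $N_i(n)/n\to\pi_i>0$, this happens for all large $n$ almost surely.

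Consequently, almost surely for $n$ large, $\widehat\Pm(n)\in\mathcal U\cap\mathcal P_{\rm st}$ and $\phi(\widehat\Pm(n))=\Phi(\widehat\Pm(n))$. Consistency \eqref{eq:central_consistency} then follows from continuity of $\Phi$ at $\Pm$; the values before that random time are irrelevant. Each differentiability hypothesis implies continuity, so consistency holds in all cases.

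For (i), I write $\Phi(\Pm+\mathbf H)=\Phi(\Pm)+\Phi'_{\Pm}(\mathbf H)+\lVert\mathbf H\rVert\,\rho(\mathbf H)$ with $\rho(\mathbf H)\to0$ as $\mathbf H\to0$, and substitute $\mathbf H=\Wm_n/\sqrt n$. This gives
\[
\sqrt n\{\phi(\widehat\Pm(n))-\phi(\Pm)\}=\Phi'_{\Pm}(\Wm_n)+\lVert\Wm_n\rVert\rho(\Wm_n/\sqrt n)
\]
on an event of probability tending to one. The remainder is $O_p(1)\,o_p(1)$. The conclusion then follows from the continuous mapping theorem applied to the linear map $\Phi'_{\Pm}$, together with Slutsky's lemma. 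This is simply the functional delta method, \cite[Theorem~20.8]{vaart_1998}, written out in finite dimensions.

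For (ii), I use Taylor's formula with integral or Peano remainder on a convex ball $B\subset\mathcal U$ around $\Pm$:
\[
\Phi(\Pm+\mathbf H)=\Phi(\Pm)+\sum_{j=1}^r\frac1{j!}\Phi^{(j)}_{\Pm}(\mathbf H,\ldots,\mathbf H)+o(\lVert\mathbf H\rVert^r),
\]
which is valid by continuity of $\Phi^{(r)}$. Substituting $\mathbf H=\Wm_n/\sqrt n$ and using $r$-linearity, $\Phi^{(j)}_{\Pm}(\mathbf H,\ldots,\mathbf H)=n^{-j/2}\Phi^{(j)}_{\Pm}(\Wm_n,\ldots,\Wm_n)$. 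The remainder is $o_p(1)\lVert\Wm_n\rVert^r n^{-r/2}=o_p(n^{-r/2})$, which proves \eqref{eq:central_finite_expansion}. For \eqref{eq:central_m_order_limit}, I apply the same expansion at order $m$: after multiplying by $n^{m/2}$, what remains is $\frac1{m!}\Phi^{(m)}_{\Pm}(\Wm_n,\ldots,\Wm_n)+o_p(1)$. The map $\mathbf H\mapsto\Phi^{(m)}_{\Pm}(\mathbf H,\ldots,\mathbf H)$ is a continuous polynomial, so the continuous mapping theorem applies.

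For (iii), real analyticity gives a ball $\mathcal V=\{\lVert\mathbf Q-\Pm\rVert<\varepsilon\}$ on which the Taylor series converges absolutely to $\Phi$. Standard Cauchy-type estimates give $\lVert\Phi^{(j)}_{\Pm}\rVert\le C j!\,\varepsilon_0^{-j}$ with $\varepsilon<\varepsilon_0$. On the event $\{\widehat\Pm(n)\in\mathcal V\cap\mathcal P_{\rm st}\}$, whose probability tends to one by consistency, \eqref{eq:central_analytic_series} holds exactly. For the truncation, the tail is bounded by
\[
C\sum_{j>r}(\lVert\Wm_n\rVert/(\sqrt n\,\varepsilon_0))^j\le C'\bigl(\lVert\Wm_n\rVert/\sqrt n\bigr)^{r+1},
\]
valid once $\lVert\Wm_n\rVert/\sqrt n\le\varepsilon_0/2$. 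This bound is $O_p(n^{-(r+1)/2})$. I expect the Cauchy-estimate bound on the multilinear norms to be the step most in need of care. If convenient, I will instead use the equivalent formulation via the complexified power series with a geometric majorant.

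For (iv), set $\Delta=\Phi_1-\Phi_2$, which is $C^r$ and vanishes on $\mathcal U\cap\mathcal P_{\rm st}$. The key geometric claim is that for $\mathbf H\in\mathcal T_{\Pm}$ the segment $\Pm+t\mathbf H$ lies in $\mathcal P_{\rm st}$ for $|t|$ small. Indeed, row sums stay equal to one, entries with $p_{ij}=0$ stay zero, and positive entries stay positive. Hence $g(t)=\Delta(\Pm+t\mathbf H)\equiv0$ near $t=0$, so $g^{(j)}(0)=\Delta^{(j)}_{\Pm}(\mathbf H,\ldots,\mathbf H)=0$ for $j\le r$.

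To pass from the diagonal to arbitrary arguments, I apply the same argument to $\mathbf H=\sum_k t_k\mathbf H_k\in\mathcal T_{\Pm}$, which is allowed because $\mathcal T_{\Pm}$ is a linear space. This gives a vanishing function of $(t_1,\ldots,t_j)$, and its mixed derivative yields $\Delta^{(j)}_{\Pm}(\mathbf H_1,\ldots,\mathbf H_j)=0$; equivalently, one can use polarization of the symmetric form. Since $\Wm_{\!\Pm}\in\mathcal T_{\Pm}$ almost surely by \eqref{eq:WP_tangent}, and $\Wm_n\in\mathcal T_{\Pm}$ once all rows are visited (the zero pattern is preserved because $N_{ij}(n)=0$ when $p_{ij}=0$), all the terms in (i)–(iii) are representation-independent.
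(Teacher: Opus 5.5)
Your proposal is correct and follows the paper's proof essentially step by step. You use the eventual stochasticity of $\widehat\Pm(n)$ to replace $\phi$ by $\Phi$, a Peano-remainder Taylor expansion with $\Wm_n=O_p(1)$ for (ii), a geometric bound on the analytic tail for (iii), and differentiation of $\Phi_1-\Phi_2$ along $\Pm+\sum_k t_k\mathbf H_k\in\mathcal P_{\rm st}$ for (iv). The differences are only in presentation: in (i) you run the finite-dimensional delta method directly via the Fr\'echet remainder and Slutsky's lemma rather than checking Hadamard differentiability tangentially to $\mathcal T_{\Pm}$ and citing \cite[Theorem~20.8]{vaart_1998}, and in (iii) you supply the Cauchy-estimate justification of the tail bound, which the paper only asserts.
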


\begin{proof}
Let
\begin{equation*}
E_n=\{\widehat\Pm(n)\in\mathcal U\}
\quad\text{and}\quad
F_n=\{N_i(n)>0,\ i=1,\ldots,s\}.
\end{equation*}
The strong consistency in Theorem~\ref{theo:classical_mle_clt} gives $\mathbb P(E_n)\to1$. Since the chain is finite and irreducible, every state is visited infinitely often almost surely; hence $F_n$ occurs eventually almost surely. On $E_n\cap F_n$, the matrix $\widehat\Pm(n)$ is stochastic and
\begin{equation*}
\phi(\widehat\Pm(n))=\Phi(\widehat\Pm(n)).
\end{equation*}
If $\Phi$ is continuous at $\Pm$, \eqref{eq:central_consistency} follows from the continuous mapping theorem.

We first establish the tangential differentiability required in (i). Let $t_n\downarrow0$ and let $\mathbf H_n\to\mathbf H\in\mathcal T_{\Pm}$ be such that
\begin{equation*}
\Pm+t_n\mathbf H_n\in\mathcal U\cap\mathcal P_{\rm st}.
\end{equation*}
Since $\Phi$ represents $\phi$ on this set and is Fr\'{e}chet differentiable at $\Pm$,
\begin{align*}
\frac{\phi(\Pm+t_n\mathbf H_n)-\phi(\Pm)}{t_n}
&=
\frac{\Phi(\Pm+t_n\mathbf H_n)-\Phi(\Pm)}{t_n}\\
&=
\Phi'_{\Pm}(\mathbf H_n)+o(1)
\longrightarrow
\Phi'_{\Pm}(\mathbf H).
\end{align*}
Thus $\phi$ is Hadamard differentiable at $\Pm$ tangentially to $\mathcal T_{\Pm}$, with derivative equal to the restriction of $\Phi'_{\Pm}$ to $\mathcal T_{\Pm}$. By Theorem~\ref{theo:classical_mle_clt},
\begin{equation*}
\Wm_n\xrightarrow[n\rightarrow\infty]{d}\Wm_{\!\Pm}
\quad\text{and}\quad
\Wm_{\!\Pm}\in\mathcal T_{\Pm}\quad\text{a.s.}
\end{equation*}
Assertion (i) now follows from the functional delta method \cite[Theorem~20.8]{vaart_1998}.

For (ii), put
\begin{equation*}
\mathbf H_n=\widehat\Pm(n)-\Pm=n^{-1/2}\Wm_n.
\end{equation*}
Taylor's formula in the finite-dimensional normed space $\mathbb R^{s\times s}$ gives
\begin{equation*}
\Phi(\Pm+\mathbf H_n)
=
\Phi(\Pm)
+
\sum_{j=1}^{r}
\frac1{j!}\Phi_{\Pm}^{(j)}(\mathbf H_n,\ldots,\mathbf H_n)
+
\mathbf R_r(\mathbf H_n),
\end{equation*}
where
\begin{equation*}
\lVert\mathbf R_r(\mathbf H_n)\rVert
=o_p(\lVert\mathbf H_n\rVert^r).
\end{equation*}
Since $\Wm_n=O_p(1)$, it follows that
\begin{equation*}
\mathbf R_r(\mathbf H_n)=o_p(n^{-r/2}),
\end{equation*}
which proves \eqref{eq:central_finite_expansion}.

Let $m\in\{1,\ldots,r\}$. Multiplying the expansion by $n^{m/2}$ after subtracting the terms of orders $1,\ldots,m-1$ yields
\begin{align*}
& n^{m/2}
\left\{
\phi(\widehat\Pm(n))-\phi(\Pm)
-
\sum_{j=1}^{m-1}
\frac{1}{j!\,n^{j/2}}
\Phi_{\Pm}^{(j)}(\Wm_n,\ldots,\Wm_n)
\right\}\\
&\quad=
\frac1{m!}\Phi_{\Pm}^{(m)}(\Wm_n,\ldots,\Wm_n)
+
\sum_{j=m+1}^{r}
\frac{1}{j!\,n^{(j-m)/2}}
\Phi_{\Pm}^{(j)}(\Wm_n,\ldots,\Wm_n)
+
o_p(1),
\end{align*}
where the second sum is absent if $m=r$. Since $\Wm_n=O_p(1)$, every term in this sum is $o_p(1)$. The mapping
\begin{equation*}
\mathbf H\longmapsto
\Phi_{\Pm}^{(m)}(\mathbf H,\ldots,\mathbf H)
\end{equation*}
is continuous. The continuous mapping theorem therefore proves \eqref{eq:central_m_order_limit}.

For (iii), real analyticity at $\Pm$ gives an open neighbourhood $\mathcal V\subset\mathcal U$ on which the Taylor series of $\Phi$ at $\Pm$ converges absolutely to $\Phi$. Strong consistency and the eventual occurrence of $F_n$ imply
\begin{equation*}
\mathbb P\{\widehat\Pm(n)\in\mathcal V\cap\mathcal P_{\rm st}\}
\longrightarrow1.
\end{equation*}
On this event,
\begin{equation*}
\Phi(\widehat\Pm(n))
=
\Phi(\Pm)
+
\sum_{j\geq1}
\frac1{j!}
\Phi_{\Pm}^{(j)}(\mathbf H_n,\ldots,\mathbf H_n).
\end{equation*}
Substitution of $\mathbf H_n=n^{-1/2}\Wm_n$ proves \eqref{eq:central_analytic_series}. By reducing $\mathcal V$ if necessary, the analytic remainder satisfies, for every fixed $r\geq1$,
\begin{equation*}
\left\|
\sum_{j\geq r+1}
\frac1{j!}
\Phi_{\Pm}^{(j)}(\mathbf H_n,\ldots,\mathbf H_n)
\right\|
\leq C_r\lVert\mathbf H_n\rVert^{r+1}
\end{equation*}
on an event whose probability tends to one. Since $\Wm_n=O_p(1)$, this gives \eqref{eq:central_analytic_truncation}.

It remains to prove (iv). Let $\mathbf H_1,\ldots,\mathbf H_j\in\mathcal T_{\Pm}$. By the definition of $\mathcal T_{\Pm}$ in Section~\ref{sec: prelims}, every row of each $\mathbf H_\ell$ sums to zero and $H_{\ell,ab}=0$ whenever $p_{ab}=0$. Hence, for all sufficiently small real $t_1,\ldots,t_j$,
\begin{equation*}
\Pm+t_1\mathbf H_1+\cdots+t_j\mathbf H_j
\in\mathcal U\cap\mathcal P_{\rm st}.
\end{equation*}
The equality of $\Phi_1$ and $\Phi_2$ on this set implies
\begin{equation*}
\Phi_1(\Pm+t_1\mathbf H_1+\cdots+t_j\mathbf H_j)
=
\Phi_2(\Pm+t_1\mathbf H_1+\cdots+t_j\mathbf H_j).
\end{equation*}
Differentiating once with respect to each of $t_1,\ldots,t_j$ at the origin proves \eqref{eq:central_invariance}. Since $\Wm_n\in\mathcal T_{\Pm}$ eventually almost surely and $\Wm_{\!\Pm}\in\mathcal T_{\Pm}$ almost surely, the final assertion follows.
\end{proof}

The second-order consequences of Theorem \ref{theo:matrix_space_delta} are recorded next.
\begin{remark}
Taking $m=2$ in assertion~(ii) gives
\begin{equation}
\label{eq:second_order_expansion}
\phi(\widehat\Pm(n))
=
\phi(\Pm)
+
\frac1{\sqrt n}\Phi'_{\Pm}(\Wm_n)
+
\frac1{2n}\Phi_{\Pm}^{(2)}(\Wm_n,\Wm_n)
+
o_p(n^{-1}),
\end{equation}
and
\begin{equation}
\label{eq:second_order_limit}
n\left\{
\phi(\widehat\Pm(n))-\phi(\Pm)-\Phi'_{\Pm}(\widehat\Pm(n)-\Pm)
\right\}
\xrightarrow[n\rightarrow\infty]{d}
\frac12\Phi_{\Pm}^{(2)}(\Wm_{\!\Pm},\Wm_{\!\Pm}).
\end{equation}
The term $\frac12\Phi_{\Pm}^{(2)}(\mathbf H,\mathbf H)$ is the curvature contribution of the functional. The stochastic expansion \eqref{eq:second_order_expansion} is expressed in terms of $\Wm_n$. Replacing $\Wm_n$ by $\Wm_{\!\Pm}$ gives a curvature-refined Gaussian approximation. The central limit theorem alone does not establish second-order distributional accuracy for this approximation; such a conclusion would require an Edgeworth expansion or another approximation with an explicit rate.

For a scalar functional, put
\begin{equation*}
\mathbf H_n=\widehat\Pm(n)-\Pm
\end{equation*}
and define the second-order remainder by
\begin{equation*}
R_2(n)=
\phi(\widehat\Pm(n))-\phi(\Pm)
-\Phi'_{\Pm}(\mathbf H_n)
-\frac12\Phi_{\Pm}^{(2)}(\mathbf H_n,\mathbf H_n).
\end{equation*}
If, in addition,
\begin{equation*}
n\,\EE(\mathbf H_n)\longrightarrow\mathbf B_{\Pm},
\end{equation*}
the family $\{\lVert\Wm_n\rVert^2:n\geq1\}$ is uniformly integrable, and
\begin{equation*}
n\,\EE\{|R_2(n)|\}\longrightarrow0,
\end{equation*}
then expectations may be taken in \eqref{eq:second_order_expansion}, and
\begin{equation*}
n\left[\EE\{\phi(\widehat\Pm(n))\}-\phi(\Pm)\right]
\longrightarrow
\Phi'_{\Pm}(\mathbf B_{\Pm})
+
\frac12\EE\left\{\Phi_{\Pm}^{(2)}(\Wm_{\!\Pm},\Wm_{\!\Pm})\right\}.
\end{equation*}
The first term is inherited from the order $n^{-1}$ bias of the transition-matrix estimator, whereas the second is the curvature contribution of the functional. Thus the curvature term alone is not a complete order $n^{-1}$ mean correction unless the matrix-bias contribution vanishes or is also included.

For the main families used later, the curvature term has an explicit matrix form. For $\Phi_k(\Pm)=\Pm^k$,
\begin{equation*}
\frac12\Phi_{k,\Pm}^{(2)}(\mathbf H,\mathbf H)=
\sum_{a+b+c=k-2}\Pm^a\mathbf H\Pm^b\mathbf H\Pm^c,
\end{equation*}
where the sum is over non-negative integers $a,b,c$. For the resolvent
\begin{equation*}
\mathbf N_{\mathbf B,\mathbf C}(\Pm)=(\mathbf I_r-\mathbf B\Pm\mathbf C)^{-1},
\end{equation*}
the corresponding term is
\begin{equation*}
\mathbf N_{\mathbf B,\mathbf C}\mathbf B\mathbf H\mathbf C
\mathbf N_{\mathbf B,\mathbf C}\mathbf B\mathbf H\mathbf C
\mathbf N_{\mathbf B,\mathbf C}.
\end{equation*}
For the stationary vector, with
\begin{equation*}
\mathbf R_{\Pm}=(\mathbf I_s-\Pm+\mathbf A)^{-1},
\end{equation*}
the curvature term is
\begin{equation*}
\boldsymbol\pi\mathbf H\mathbf R_{\Pm}\mathbf H\mathbf R_{\Pm}.
\end{equation*}
Scalar reliability and Markov characteristics are obtained by multiplying these expressions by the corresponding fixed vectors and selection matrices. Thus the curvature term of an availability ordinate $A_k=\mathbf a\Pm^k\mathbf e_U$ is
\begin{equation*}
\mathbf a
\left\{
\sum_{a+b+c=k-2}\Pm^a\mathbf H\Pm^b\mathbf H\Pm^c
\right\}
\mathbf e_U,
\end{equation*}
whereas for the mean time to failure, with $\mathbf N_U=(\mathbf I_r-\Pm_{UU})^{-1}$ and $\mathbf H_{UU}=\mathbf E_U^\top\mathbf H\mathbf E_U$, it is
\begin{equation*}
\mathbf a_U\mathbf N_U\mathbf H_{UU}\mathbf N_U\mathbf H_{UU}\mathbf N_U\mathbf 1_r.
\end{equation*}
The same expression with the down-state block gives the curvature term for the mean time to repair. For the intensity functional $r_k=\mathbf a\Pm^{k-1}\mathbf J_U\Pm\mathbf e_D$, the curvature term is
\begin{equation*}
\mathbf a\left\{
\sum_{a+b+c=k-3}\Pm^a\mathbf H\Pm^b\mathbf H\Pm^c\mathbf J_U\Pm
+\sum_{i=0}^{k-2}\Pm^i\mathbf H\Pm^{k-2-i}\mathbf J_U\mathbf H
\right\}\mathbf e_D,
\end{equation*}
with the convention that empty sums are zero.
\end{remark}

\begin{remark}
When $\Phi$ is analytic, the expansion in \eqref{eq:central_analytic_series} gives a systematic hierarchy of approximations in powers of $n^{-1/2}$. Polynomial functionals have a finite expansion. Inverse-type functionals have a convergent local expansion on the set where the defining inverse exists. This is the reason why powers, resolvents, stationary probabilities, mean hitting and repair times, and the reliability characteristics below are covered by the same theorem.
\end{remark}

The next proposition gives the form in which the matrix limit is converted into the vector covariance matrix used for statistical inference. It also records the common situation where the derivative is a finite sum of left and right matrix multiplications.

\begin{proposition}
\label{prop:linear_covariance}
Let $\Phi$ be as in Theorem \ref{theo:matrix_space_delta}, with values in $\mathbb R^{q\times r}$, and suppose that there exists a matrix $\mathbf L_\Phi\in\mathbb R^{qr\times s^2}$ such that
\begin{equation}
\label{eq:operator_linearization}
\vecc\{\Phi'_{\Pm}(\mathbf H)^\top\}=
\mathbf L_\Phi\vecc(\mathbf H^\top),
\qquad \mathbf H\in\mathbb R^{s\times s}.
\end{equation}
Then
\begin{equation*}
\vecc\{\Phi'_{\Pm}(\Wm_{\!\Pm})^\top\}
=
\mathbf L_\Phi\Wm_{\!\pv},
\end{equation*}
and
\begin{equation}
\label{eq:linear_covariance}
\operatorname{Cov}\left[\vecc\{\Phi'_{\Pm}(\Wm_{\!\Pm})^\top\}\right]
=
\mathbf L_\Phi\Sm_{\pv}\mathbf L_\Phi^\top.
\end{equation}
If
\begin{equation}
\label{eq:operator_sum_form}
\Phi'_{\Pm}(\mathbf H)=
\sum_{\ell=1}^{m}\mathbf B_\ell\mathbf H\mathbf A_\ell,
\end{equation}
with fixed matrices $\mathbf B_\ell\in\mathbb R^{q\times s}$ and $\mathbf A_\ell\in\mathbb R^{s\times r}$, then
\begin{equation}
\label{eq:explicit_L_phi}
\mathbf L_\Phi=\sum_{\ell=1}^{m}\mathbf B_\ell\otimes\mathbf A_\ell^\top.
\end{equation}
Finally, for a scalar characteristic
\begin{equation*}
\phi(\Pm)=\mathbf a\Phi(\Pm)\mathbf c,
\end{equation*}
where $\mathbf a\in\mathbb R^{1\times q}$ and $\mathbf c\in\mathbb R^{r\times1}$ are fixed, the limiting variable is
\begin{equation*}
\mathbf a\Phi'_{\Pm}(\Wm_{\!\Pm})\mathbf c,
\end{equation*}
with variance
\begin{equation}
\label{eq:scalar_linear_variance}
\sigma^2_\phi=
(\mathbf a\otimes\mathbf c^\top)
\mathbf L_\Phi\Sm_{\pv}\mathbf L_\Phi^\top
(\mathbf a^\top\otimes\mathbf c).
\end{equation}
\end{proposition}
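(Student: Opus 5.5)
The proof is linear algebra layered on Theorem~\ref{theo:classical_mle_clt}, and I will follow the order of the statement. First, apply the identity \eqref{eq:operator_linearization} to the random matrix $\mathbf H=\Wm_{\!\Pm}$. It holds for every $\mathbf H\in\mathbb R^{s\times s}$, so it holds pointwise on the sample space. By \eqref{eq:mle_d_matrix} we have $\vecc(\Wm_{\!\Pm}^\top)=\Wm_{\!\pv}$, which gives $\vecc\{\Phi'_{\Pm}(\Wm_{\!\Pm})^\top\}=\mathbf L_\Phi\Wm_{\!\pv}$. Since $\Wm_{\!\pv}\sim\mathcal N_{s^2}(\mathbf 0,\Sm_{\pv})$, the usual rule for the covariance of a linear image, $\Cov(\mathbf L\mathbf X)=\mathbf L\Cov(\mathbf X)\mathbf L^\top$, yields \eqref{eq:linear_covariance}. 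The image is again a centered Gaussian vector.

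Second, for the sum form \eqref{eq:operator_sum_form}, I will use linearity to reduce to a single term $\mathbf B\mathbf H\mathbf A$. Transposition gives $(\mathbf B\mathbf H\mathbf A)^\top=\mathbf A^\top\mathbf H^\top\mathbf B^\top$. The column-stacking identity $\vecc(\mathbf X\mathbf Y\mathbf Z)=(\mathbf Z^\top\otimes\mathbf X)\vecc(\mathbf Y)$ from Appendix~\ref{app:kronecker} then gives
\begin{equation*}
\vecc\{(\mathbf B\mathbf H\mathbf A)^\top\}=(\mathbf B\otimes\mathbf A^\top)\vecc(\mathbf H^\top).
\end{equation*}
Summing over $\ell$ produces \eqref{eq:explicit_L_phi}.

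Third, for the scalar characteristic, I note that $\mathbf Q\mapsto\mathbf a\Phi(\mathbf Q)\mathbf c$ is the composition of $\Phi$ with a fixed linear map. Its Fr\'echet derivative is therefore $\mathbf H\mapsto\mathbf a\Phi'_{\Pm}(\mathbf H)\mathbf c$, and Theorem~\ref{theo:matrix_space_delta}(i) identifies the limiting variable. For the variance, I write the scalar as its own vectorization: $\mathbf a\mathbf Y\mathbf c=\vecc\{(\mathbf a\mathbf Y\mathbf c)^\top\}=(\mathbf a\otimes\mathbf c^\top)\vecc(\mathbf Y^\top)$, which is the same identity as in the second step applied with $\mathbf B=\mathbf a$ and $\mathbf A=\mathbf c$. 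Substituting $\mathbf Y=\Phi'_{\Pm}(\Wm_{\!\Pm})$ and using the first step gives the representation $(\mathbf a\otimes\mathbf c^\top)\mathbf L_\Phi\Wm_{\!\pv}$. Its variance is \eqref{eq:scalar_linear_variance}, because $(\mathbf a\otimes\mathbf c^\top)^\top=\mathbf a^\top\otimes\mathbf c$.

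The only step that needs care is the second one. The row-wise convention $\vecc(\cdot^\top)$ is what turns the familiar $\mathbf A^\top\otimes\mathbf B$ ordering into $\mathbf B\otimes\mathbf A^\top$, and the transposes and the Kronecker factor order must be checked against the appendix identity. Everything else is immediate.
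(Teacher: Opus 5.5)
Your proposal is correct and follows essentially the same route as the paper. You apply the linearization identity to $\Wm_{\!\Pm}$ with $\operatorname{Cov}(\Wm_{\!\pv})=\Sm_{\pv}$, obtain $\mathbf B_\ell\otimes\mathbf A_\ell^\top$ from Lemma~\ref{lem: vec_propereties} applied to the transposed product, and derive the scalar variance from $\mathbf a\mathbf Y\mathbf c=(\mathbf a\otimes\mathbf c^\top)\vecc(\mathbf Y^\top)$; your explicit chain-rule appeal to Theorem~\ref{theo:matrix_space_delta}(i) for the scalar limit is only a small clarification that the paper leaves implicit.
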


\begin{proof}
The first covariance identity follows from the definition of $\mathbf L_\Phi$ and from $\operatorname{Cov}(\Wm_{\!\pv})=\Sm_{\pv}$. If the derivative has the form \eqref{eq:operator_sum_form}, Lemma \ref{lem: vec_propereties} gives
\begin{equation*}
\vecc\{(\mathbf B_\ell\mathbf H\mathbf A_\ell)^\top\}=(\mathbf B_\ell\otimes\mathbf A_\ell^\top)\vecc(\mathbf H^\top),
\end{equation*}
and summation gives \eqref{eq:explicit_L_phi}. The scalar formula follows from
\begin{equation*}
\mathbf a\mathbf Y\mathbf c=(\mathbf a\otimes\mathbf c^\top)\vecc(\mathbf Y^\top),
\end{equation*}
applied to $\mathbf Y=\Phi'_{\Pm}(\Wm_{\!\Pm})$.
\end{proof}

\begin{remark}
\label{rem:plug_in_inference}
The same matrix $\mathbf L_\Phi$ gives the standard inferential quantities. Let
\begin{equation*}
\bs{\phi}=\vecc\{\Phi(\Pm)^\top\},
\qquad
\widehat{\bs{\phi}}_n=\vecc\{\Phi(\widehat\Pm(n))^\top\},
\qquad
\mathbf\Gamma_\Phi=\mathbf L_\Phi\Sm_{\pv}\mathbf L_\Phi^\top.
\end{equation*}
Unknown quantities in $\mathbf\Gamma_\Phi$ are estimated by replacing $\Pm$ and $\boldsymbol\pi$ by their MLEs. If $\mathbf\Gamma_\Phi$ is non-singular and $\widehat{\mathbf\Gamma}_{\Phi,n}$ is consistent, then
\begin{equation*}
n(\widehat{\bs{\phi}}_n-\bs{\phi})^\top
\widehat{\mathbf\Gamma}_{\Phi,n}^{-1}
(\widehat{\bs{\phi}}_n-\bs{\phi})
\xrightarrow[n\rightarrow\infty]{d}\chi^2_{qr}.
\end{equation*}
For a scalar characteristic with asymptotic variance $\sigma_\phi^2>0$, the usual interval is
\begin{equation*}
\widehat\phi_n\pm z_{1-\alpha/2}\frac{\widehat\sigma_{\phi,n}}{\sqrt n}.
\end{equation*}
Suppose that $\bs{\phi}=(\phi_0,\ldots,\phi_m)^\top$ is a fixed finite-dimensional curve and that the diagonal elements of $\mathbf\Gamma_\Phi$ are positive. Define
\begin{equation*}
\widehat{\mathbf D}_{\Phi,n}
=
\diag(\widehat\sigma_{\phi_0,n},\ldots,\widehat\sigma_{\phi_m,n}),
\qquad
\widehat{\mathbf C}_{\Phi,n}
=
\widehat{\mathbf D}_{\Phi,n}^{-1}
\widehat{\mathbf\Gamma}_{\Phi,n}
\widehat{\mathbf D}_{\Phi,n}^{-1}.
\end{equation*}
Let $\widehat c_{1-\alpha,n}$ be the conditional $(1-\alpha)$-quantile of
\begin{equation*}
\max_{0\leq k\leq m}|Z_k|,
\qquad
\mathbf Z\sim\mathcal N_{m+1}
(\mathbf 0_{m+1},\widehat{\mathbf C}_{\Phi,n}).
\end{equation*}
If $\widehat{\mathbf\Gamma}_{\Phi,n}\xrightarrow{p}\mathbf\Gamma_\Phi$ and the distribution of the corresponding Gaussian maximum is continuous at its $(1-\alpha)$-quantile, then
\begin{equation*}
\widehat\phi_{k,n}\pm \widehat c_{1-\alpha,n}
\frac{\widehat\sigma_{\phi_k,n}}{\sqrt n},
\qquad k=0,\ldots,m,
\end{equation*}
have asymptotic joint coverage $1-\alpha$. Thus both the marginal scales and the simultaneous critical value are computed from the same plug-in covariance matrix. If $\mathbf R\bs{\phi}=\mathbf b$ is a system of $d$ linear restrictions and $\mathbf R\mathbf\Gamma_\Phi\mathbf R^\top$ is non-singular, then, under the null hypothesis,
\begin{equation*}
n(\mathbf R\widehat{\bs{\phi}}_n-\mathbf b)^\top
(\mathbf R\widehat{\mathbf\Gamma}_{\Phi,n}\mathbf R^\top)^{-1}
(\mathbf R\widehat{\bs{\phi}}_n-\mathbf b)
\xrightarrow[n\rightarrow\infty]{d}\chi^2_d.
\end{equation*}
\end{remark}

We shall also use the following elementary differential rules.

\begin{proposition}
\label{prop:product_inverse_rules}
Let $\Phi$ and $\Psi$ be Fr\'echet differentiable matrix-valued maps at $\Pm$. Then
\begin{equation*}
(\Phi\Psi)'_{\Pm}(\mathbf H)
=\Phi'_{\Pm}(\mathbf H)\Psi(\Pm)+\Phi(\Pm)\Psi'_{\Pm}(\mathbf H).
\end{equation*}
If $\Phi(\Pm)$ is non-singular, then the inverse map is differentiable at $\Pm$ and
\begin{equation*}
(\Phi^{-1})'_{\Pm}(\mathbf H)
=-\Phi(\Pm)^{-1}\Phi'_{\Pm}(\mathbf H)\Phi(\Pm)^{-1}.
\end{equation*}
\end{proposition}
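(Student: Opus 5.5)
The plan is to expand each composite map at $\Pm+\mathbf H$ and identify its linear part in $\mathbf H$, with a remainder that is $o(\lVert\mathbf H\rVert)$. Throughout we use a submultiplicative matrix norm on $\mathbb R^{s\times s}$ and the matching operator norms on the value spaces; all norms are equivalent in finite dimensions, so Fréchet differentiability does not depend on this choice.

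For the product rule, write $\Phi(\Pm+\mathbf H)=\Phi(\Pm)+\Phi'_{\Pm}(\mathbf H)+r_1(\mathbf H)$ and $\Psi(\Pm+\mathbf H)=\Psi(\Pm)+\Psi'_{\Pm}(\mathbf H)+r_2(\mathbf H)$, where $\lVert r_i(\mathbf H)\rVert=o(\lVert\mathbf H\rVert)$. Multiplying out gives
\[
(\Phi\Psi)(\Pm+\mathbf H)-(\Phi\Psi)(\Pm)=\Phi'_{\Pm}(\mathbf H)\Psi(\Pm)+\Phi(\Pm)\Psi'_{\Pm}(\mathbf H)+\rho(\mathbf H).
\]
Here $\rho(\mathbf H)$ collects every term containing an $r_i$, or containing both derivative terms. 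The following bounds hold:
\[
\lVert\Phi'_{\Pm}(\mathbf H)\Psi'_{\Pm}(\mathbf H)\rVert\le \lVert\Phi'_{\Pm}\rVert\,\lVert\Psi'_{\Pm}\rVert\,\lVert\mathbf H\rVert^2,
\qquad
\lVert\Phi(\Pm+\mathbf H)\rVert,\ \lVert\Psi(\Pm+\mathbf H)\rVert \text{ bounded near } \mathbf H=\mathbf 0.
\]
The second bound holds because differentiability implies continuity. With these, $\rho(\mathbf H)=o(\lVert\mathbf H\rVert)$. The displayed map is linear in $\mathbf H$, so it is the derivative.

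For the inverse rule, first note that $\Phi$ is continuous at $\Pm$ and $\det$ is continuous, so $\Phi(\Pm+\mathbf H)$ is non-singular for small $\mathbf H$. Then use the Neumann-series argument in the following steps.

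1. Inversion $\iota(\mathbf M)=\mathbf M^{-1}$ is differentiable at a non-singular $\mathbf M_0$ with $\iota'_{\mathbf M_0}(\mathbf K)=-\mathbf M_0^{-1}\mathbf K\mathbf M_0^{-1}$. To see this, write
\[
(\mathbf M_0+\mathbf K)^{-1}=(\mathbf I+\mathbf M_0^{-1}\mathbf K)^{-1}\mathbf M_0^{-1}=\sum_{j\ge0}(-\mathbf M_0^{-1}\mathbf K)^j\mathbf M_0^{-1}
\]
for $\lVert\mathbf M_0^{-1}\mathbf K\rVert<1$. The tail beyond $j=1$ is bounded by $\lVert\mathbf M_0^{-1}\rVert^3\lVert\mathbf K\rVert^2/(1-\lVert\mathbf M_0^{-1}\mathbf K\rVert)$, which is $o(\lVert\mathbf K\rVert)$.

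2. Apply the chain rule for Fréchet derivatives with $\mathbf K=\Phi(\Pm+\mathbf H)-\Phi(\Pm)=\Phi'_{\Pm}(\mathbf H)+o(\lVert\mathbf H\rVert)$. This uses $\lVert\mathbf K\rVert=O(\lVert\mathbf H\rVert)$, so that $o(\lVert\mathbf K\rVert)=o(\lVert\mathbf H\rVert)$.

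Alternatively, once differentiability of $\Phi^{-1}$ is known, the formula follows by differentiating $\Phi\Phi^{-1}=\mathbf I$ with the product rule and solving for $(\Phi^{-1})'_{\Pm}(\mathbf H)$.

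The only step that needs care is establishing differentiability of the inverse, rather than merely identifying its derivative; the Neumann-series bound in step 1 handles it. Everything else is routine bookkeeping of $o(\lVert\mathbf H\rVert)$ terms.
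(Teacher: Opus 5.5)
Your proof is correct. The product-rule part is essentially the paper's argument. The paper splits the increment of $\Phi\Psi$ into $\{\Phi(\Pm+\mathbf H)-\Phi(\Pm)\}\Psi(\Pm)+\Phi(\Pm)\{\Psi(\Pm+\mathbf H)-\Psi(\Pm)\}$ plus the product of the two increments. Your expansion with the remainders $r_1,r_2$ does the same thing in more detail.

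For the inverse rule your route differs. The paper only says that the formula follows by differentiating $\Phi(\Pm)\Phi(\Pm)^{-1}=\mathbf I$ in the direction $\mathbf H$. That identifies the derivative once it is known to exist, but it tacitly assumes the differentiability of $\Phi^{-1}$ that the statement asserts.

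You instead show that $\Phi(\Pm+\mathbf H)$ stays non-singular for small $\mathbf H$. Your Neumann-series bound then proves that matrix inversion is Fr\'echet differentiable at any non-singular $\mathbf M_0$, with derivative $\mathbf K\mapsto-\mathbf M_0^{-1}\mathbf K\mathbf M_0^{-1}$. The chain rule then gives existence and the formula together, and differentiating the identity becomes only a consistency check.

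Your argument is slightly longer but establishes the full claim. The paper's is shorter and presupposes existence. One small remark: with your expansion of both factors around $\Pm$, the boundedness of $\lVert\Phi(\Pm+\mathbf H)\rVert$ and $\lVert\Psi(\Pm+\mathbf H)\rVert$ is not actually needed, though invoking it does no harm.
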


\begin{proof}
For the product rule, write
\begin{equation*}
\Phi(\Pm+\mathbf H)\Psi(\Pm+\mathbf H)-\Phi(\Pm)\Psi(\Pm)
\end{equation*}
as
\begin{align*}
&\{\Phi(\Pm+\mathbf H)-\Phi(\Pm)\}\Psi(\Pm)
+\Phi(\Pm)\{\Psi(\Pm+\mathbf H)-\Psi(\Pm)\}\\
&\qquad
+\{\Phi(\Pm+\mathbf H)-\Phi(\Pm)\}
 \{\Psi(\Pm+\mathbf H)-\Psi(\Pm)\}.
\end{align*}
The first two terms give the derivative and the last term is of second order. The inverse rule follows by differentiating the identity $\Phi(\Pm)\Phi(\Pm)^{-1}=\mathbf I$ in the direction $\mathbf H$.
\end{proof}

\section{Polynomial and inverse-type functionals}
\label{sec: polynomial_inverse}

The general calculus becomes useful when the differential can be written explicitly. In finite Markov theory this happens for the main algebraic families: powers, products with fixed selectors, resolvents and stationary distributions. In each case we first keep the limiting random object in its natural form and then give the vector covariance obtained by Kronecker algebra. This organization avoids deriving a new element-wise covariance formula for every functional.

\subsection{Powers of the transition matrix}
\label{subsec:powers}

The first family consists of transition probabilities over a fixed number of steps. For each fixed integer $k\geq1$, define $\Phi_k(\Pm)=\Pm^k$. The following proposition gives simultaneously the random matrix limit and the row-wise vector covariance used for inference.

\begin{proposition}
\label{prop:powers}
The map $\Phi_k:\mathbb R^{s\times s}\to\mathbb R^{s\times s}$, $\Phi_k(\mathbf M)=\mathbf M^k$, is Fr\'echet differentiable and
\begin{equation}
\label{eq:derivative_power}
\Phi'_{k,\Pm}(\mathbf H)=
\sum_{i=0}^{k-1}\Pm^i\mathbf H\Pm^{k-1-i}.
\end{equation}
Consequently,
\begin{equation}
\label{eq:limit_power_matrix}
\sqrt n\left\{\widehat\Pm(n)^k-\Pm^k\right\}
\xrightarrow[n\rightarrow\infty]{d}
\Wm_{\!\Pm^k}:=
\sum_{i=0}^{k-1}\Pm^i\Wm_{\!\Pm}\Pm^{k-1-i}.
\end{equation}
If
\begin{equation}
\label{Mk}
\mathbf M_k=
\sum_{i=0}^{k-1}\Pm^i\otimes(\Pm^{k-1-i})^\top,
\end{equation}
then
\begin{equation}
\label{vecPk}
\Wm_{\!\pv^{(k)}}:=\vecc(\Wm_{\!\Pm^k}^{\top})
=\mathbf M_k\Wm_{\!\pv}
\sim
\mathcal N_{s^2}(\mathbf 0_{s^2},\Sm_{\pv^{(k)}}),
\end{equation}
where
\begin{equation}
\label{eq:explicit_var_P^k}
\Sm_{\pv^{(k)}}=
\mathbf M_k\Sm_{\pv}\mathbf M_k^\top.
\end{equation}
\end{proposition}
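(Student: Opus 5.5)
The plan has three steps: compute the derivative of $\Phi_k$ by induction, transfer it to the limit with Theorem~\ref{theo:matrix_space_delta}(i), and vectorize it with Proposition~\ref{prop:linear_covariance}.

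\textbf{Derivative.} I would argue by induction on $k$, using the product rule of Proposition~\ref{prop:product_inverse_rules}. For $k=1$, $\Phi_1$ is the identity and its derivative is $\mathbf H$. Suppose \eqref{eq:derivative_power} holds for $k$. Writing $\Phi_{k+1}(\mathbf M)=\Phi_k(\mathbf M)\mathbf M$, the product rule gives
\begin{equation*}
\Phi'_{k+1,\Pm}(\mathbf H)=\Bigl(\sum_{i=0}^{k-1}\Pm^i\mathbf H\Pm^{k-1-i}\Bigr)\Pm+\Pm^k\mathbf H
=\sum_{i=0}^{k}\Pm^i\mathbf H\Pm^{k-i}.
\end{equation*}
Alternatively, one can expand $(\Pm+\mathbf H)^k$ directly: the terms linear in $\mathbf H$ are exactly the sum above, and every remaining term contains at least two factors $\mathbf H$, so it is bounded by $C\lVert\mathbf H\rVert^2$ in a submultiplicative norm. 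Since $\Phi_k$ is a polynomial map on the whole matrix space, it is $C^\infty$. This settles differentiability.

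\textbf{Limit.} Take $\Phi=\Phi_k$ with $\mathcal U=\mathbb R^{s\times s}$, and let $\phi$ be its restriction to $\mathcal P_{\rm st}$. Theorem~\ref{theo:matrix_space_delta}(i) then gives \eqref{eq:limit_power_matrix} directly, with limit $\Phi'_{k,\Pm}(\Wm_{\!\Pm})$. On the event $F_n$ the estimator $\widehat\Pm(n)$ is stochastic. Before that event the quantity $\widehat\Pm(n)^k$ is still defined, and $F_n$ occurs eventually almost surely, so it causes no difficulty. One could even apply the continuous mapping theorem to the global Fréchet expansion without passing through the theorem.

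\textbf{Vectorization.} The derivative has the sum form \eqref{eq:operator_sum_form} with $\mathbf B_i=\Pm^i$, $\mathbf A_i=\Pm^{k-1-i}$ and $m=k$. By \eqref{eq:explicit_L_phi}, $\mathbf L_{\Phi_k}=\sum_i\Pm^i\otimes(\Pm^{k-1-i})^\top=\mathbf M_k$. The vec identity in Lemma~\ref{lem: vec_propereties}, $\vecc\{(\mathbf B\mathbf H\mathbf A)^\top\}=(\mathbf B\otimes\mathbf A^\top)\vecc(\mathbf H^\top)$, then gives $\vecc(\Wm_{\!\Pm^k}^\top)=\mathbf M_k\Wm_{\!\pv}$. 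A linear image of a centered Gaussian vector is centered Gaussian, with covariance $\mathbf M_k\Sm_{\pv}\mathbf M_k^\top$ by \eqref{eq:linear_covariance}. This yields \eqref{vecPk} and \eqref{eq:explicit_var_P^k}.

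The only step needing care is the bookkeeping of the row-wise vectorization convention, that is, checking that the transpose lands on the right Kronecker factor. This is exactly what the cited appendix identity handles. Everything else is routine.
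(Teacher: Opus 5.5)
Your proposal is correct and matches the paper's proof. The paper obtains the derivative by expanding $(\Pm+\mathbf H)^k$ directly (your alternative route; your product-rule induction is an equivalent minor variant), then applies Theorem~\ref{theo:matrix_space_delta} and vectorizes each term with Lemma~\ref{lem: vec_propereties} to get the multiplier $\Pm^i\otimes(\Pm^{k-1-i})^\top$, which is exactly what you do via Proposition~\ref{prop:linear_covariance}.
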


\begin{proof}
For $k=1$ the formula is immediate. For $k\geq2$, expand the non-commutative product $(\Pm+\mathbf H)^k$. The first-order terms are obtained by placing one copy of $\mathbf H$ at each possible position in the product and keeping $\Pm$ in all remaining positions. This gives exactly
\begin{equation*}
\sum_{i=0}^{k-1}\Pm^i\mathbf H\Pm^{k-1-i}.
\end{equation*}
All other terms contain at least two factors $\mathbf H$ and are $O(\lVert\mathbf H\rVert^2)$ in any matrix norm. This proves the Fr\'echet derivative. The matrix limit \eqref{eq:limit_power_matrix} is then Theorem \ref{theo:matrix_space_delta} applied to $\Phi_k$.

It remains only to write the same limit in row-wise vector form. Taking transposes in \eqref{eq:derivative_power} gives terms of the form $(\Pm^{k-1-i})^\top\mathbf H^\top(\Pm^i)^\top$. Lemma \ref{lem: vec_propereties} therefore gives the multiplier $\Pm^i\otimes(\Pm^{k-1-i})^\top$ for the $i$-th term, and summing over $i$ gives $\mathbf M_k$. Since $\vecc(\Wm_{\!\Pm}^\top)=\Wm_{\!\pv}$, the covariance is $\mathbf M_k\Sm_{\pv}\mathbf M_k^\top$.
\end{proof}

The convention $\mathbf M_0=\mathbf 0_{s^2\times s^2}$ and $\Wm_{\!\Pm^0}=\mathbf 0_{s\times s}$ will be used when $\Pm^0=\mathbf I_s$ is deterministic.

\subsection{Finite-dimensional curves of Markov characteristics}
\label{subsec:curves}

A number of statistical objects are not single values but finite pieces of curves. Let $\mathbf a$ be a fixed initial row vector and let $\mathbf c\in\mathbb R^{s\times1}$. For $m\geq0$, define
\begin{equation}
\label{eq:general_curve}
\Gamma_m(\Pm;\mathbf a,\mathbf c)=
\big(\mathbf a\Pm^0\mathbf c,
\mathbf a\Pm^1\mathbf c,
\ldots,
\mathbf a\Pm^m\mathbf c\big)^\top.
\end{equation}
The vector $\Gamma_m$ contains, for example, an availability curve, a cumulative occupation curve, or any finite family of transition probabilities selected by $\mathbf a$ and $\mathbf c$.

\begin{proposition}
\label{prop:markov_curves}
The plug-in estimator $\Gamma_m(\widehat\Pm(n);\mathbf a,\mathbf c)$ is strongly consistent and
\begin{equation}
\label{eq:curve_limit}
\sqrt n\left\{\Gamma_m(\widehat\Pm(n);\mathbf a,\mathbf c)-\Gamma_m(\Pm;\mathbf a,\mathbf c)\right\}
\xrightarrow[n\rightarrow\infty]{d}
\mathbf Z_m,
\end{equation}
where
\begin{equation*}
Z_{m,k}=\mathbf a\Wm_{\!\Pm^k}\mathbf c,
\qquad k=0,\ldots,m.
\end{equation*}
The covariance matrix of $\mathbf Z_m$ is given by
\begin{equation}
\label{eq:curve_cov}
\Cov(Z_{m,k},Z_{m,\ell})=
(\mathbf a\otimes\mathbf c^\top)
\mathbf M_k\Sm_{\pv}\mathbf M_\ell^\top
(\mathbf a^\top\otimes\mathbf c),
\qquad k,\ell=0,\ldots,m.
\end{equation}
\end{proposition}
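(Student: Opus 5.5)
The plan is to apply Theorem \ref{theo:matrix_space_delta} once, to a single vector-valued map, rather than to each coordinate separately. Define $\Phi:\mathbb R^{s\times s}\to\mathbb R^{m+1}$ by
\begin{equation*}
\Phi(\mathbf M)=\big(\mathbf a\mathbf M^0\mathbf c,\mathbf a\mathbf M^1\mathbf c,\ldots,\mathbf a\mathbf M^m\mathbf c\big)^\top.
\end{equation*}
This is a polynomial map defined on all of $\mathbb R^{s\times s}$, so it agrees with $\Gamma_m(\cdot;\mathbf a,\mathbf c)$ on stochastic matrices. It is also continuous and Fr\'echet differentiable everywhere. Strong consistency then follows directly from \eqref{eq:central_consistency}.

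For the derivative, we treat each coordinate as $\mathbf a\Phi_k(\mathbf M)\mathbf c$ with $\Phi_k(\mathbf M)=\mathbf M^k$. The $k$-th coordinate of $\Phi'_{\Pm}(\mathbf H)$ is therefore $\mathbf a\Phi'_{k,\Pm}(\mathbf H)\mathbf c$, with $\Phi'_{k,\Pm}$ given by \eqref{eq:derivative_power}. For $k=0$ the map is constant, so its derivative is $0$, which matches the convention $\Wm_{\!\Pm^0}=\mathbf 0$. Assertion (i) of Theorem \ref{theo:matrix_space_delta} then gives joint convergence of the whole vector to $\Phi'_{\Pm}(\Wm_{\!\Pm})$. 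By \eqref{eq:limit_power_matrix}, its $k$-th coordinate is $\mathbf a\Wm_{\!\Pm^k}\mathbf c=Z_{m,k}$, which proves \eqref{eq:curve_limit}. Because the limit is a linear image of the Gaussian matrix $\Wm_{\!\Pm}$, the vector $\mathbf Z_m$ is jointly Gaussian and centered.

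For the covariance, I would use the scalar identity from the proof of Proposition \ref{prop:linear_covariance}, namely $\mathbf a\mathbf Y\mathbf c=(\mathbf a\otimes\mathbf c^\top)\vecc(\mathbf Y^\top)$. Applying it to $\mathbf Y=\Wm_{\!\Pm^k}$ and using \eqref{vecPk} gives
\begin{equation*}
Z_{m,k}=(\mathbf a\otimes\mathbf c^\top)\mathbf M_k\Wm_{\!\pv}.
\end{equation*}
This also holds for $k=0$, since $\mathbf M_0=\mathbf 0$. Bilinearity of covariance, together with $\operatorname{Cov}(\Wm_{\!\pv})=\Sm_{\pv}$, then yields \eqref{eq:curve_cov}. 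Here we use $(\mathbf a\otimes\mathbf c^\top)^\top=\mathbf a^\top\otimes\mathbf c$.

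The only point requiring care is the \emph{joint} character of the limit. Marginal convergence of each coordinate would not be enough. This is why I would apply the delta method to the stacked map $\Phi$, which is legitimate because $\mathcal Y=\mathbb R^{m+1}$ is finite-dimensional, rather than invoking Proposition \ref{prop:powers} separately for each $k$. Everything else is routine Kronecker algebra.
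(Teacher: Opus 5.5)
Your proof is correct and is essentially the paper's argument. Both rest on Theorem~\ref{theo:matrix_space_delta}(i) with the power derivative \eqref{eq:derivative_power}, and both obtain \eqref{eq:curve_cov} from $\mathbf a\mathbf Y\mathbf c=(\mathbf a\otimes\mathbf c^\top)\vecc(\mathbf Y^\top)$ together with $\vecc(\Wm_{\!\Pm^k}^\top)=\mathbf M_k\Wm_{\!\pv}$. The only difference is how you secure joint convergence: the paper treats each scalar combination $\sum_k\lambda_k\mathbf a\Pm^k\mathbf c$ and invokes the Cram\'er--Wold device, whereas you apply Theorem~\ref{theo:matrix_space_delta}(i) once to the $\mathbb R^{m+1}$-valued map. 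The theorem permits this because $\mathcal Y$ may be any finite-dimensional normed space, so your route makes the Cram\'er--Wold step unnecessary.
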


\begin{proof}
For $k\geq1$, the $k$-th coordinate of \eqref{eq:general_curve} is the scalar linear form of the matrix power $\Pm^k$, namely $\mathbf a\Pm^k\mathbf c$. Proposition \ref{prop:powers} gives the first-order term of $\Pm^k$, and fixed multiplication by $\mathbf a$ and $\mathbf c$ gives the limiting variable $\mathbf a\Wm_{\!\Pm^k}\mathbf c$. The coordinate $k=0$ is deterministic, since $\Pm^0=\mathbf I_s$, and therefore its first-order term is zero. For any deterministic vector $\boldsymbol\lambda=(\lambda_0,\ldots,\lambda_m)$, the linear combination of the coordinates is again a finite sum of scalar linear forms of powers of $\Pm$; the preceding argument gives its limit. The Cram\'er--Wold device gives joint convergence. Finally, the covariance between coordinates $k$ and $\ell$ is obtained by writing both limiting terms in row-wise vector form,
\begin{equation*}
\mathbf a\Wm_{\!\Pm^k}\mathbf c
=(\mathbf a\otimes\mathbf c^\top)\mathbf M_k\Wm_{\!\pv},
\end{equation*}
and similarly for $\ell$. This gives \eqref{eq:curve_cov}.
\end{proof}

\subsection{Resolvents and inverse-type functionals}
\label{subsec:inverse_type}

Inverse-type functionals enter naturally through fundamental matrices of transient subchains, restricted transition matrices and stationary probabilities. The following proposition gives the inverse-type form used later for mean hitting and repair times.

\begin{proposition}
\label{prop:resolvent}
Let $\mathbf B\in\mathbb R^{r\times s}$ and $\mathbf C\in\mathbb R^{s\times r}$ be fixed matrices. Suppose that $\rho(\mathbf B\Pm\mathbf C)<1$ and define
\begin{equation}
\label{eq:general_resolvent}
\mathbf N_{\mathbf B,\mathbf C}(\Pm)=
(\mathbf I_r-\mathbf B\Pm\mathbf C)^{-1}.
\end{equation}
Then
\begin{equation}
\label{eq:resolvent_derivative}
\mathbf N'_{\mathbf B,\mathbf C;\Pm}(\mathbf H)=
\mathbf N_{\mathbf B,\mathbf C}(\Pm)
\mathbf B\mathbf H\mathbf C
\mathbf N_{\mathbf B,\mathbf C}(\Pm),
\end{equation}
and
\begin{equation}
\label{eq:resolvent_limit}
\sqrt n\left\{\mathbf N_{\mathbf B,\mathbf C}(\widehat\Pm(n))-\mathbf N_{\mathbf B,\mathbf C}(\Pm)\right\}
\xrightarrow[n\rightarrow\infty]{d}
\mathbf N_{\mathbf B,\mathbf C}(\Pm)
\mathbf B\Wm_{\!\Pm}\mathbf C
\mathbf N_{\mathbf B,\mathbf C}(\Pm).
\end{equation}
For fixed $\mathbf u\in\mathbb R^{1\times r}$ and $\mathbf v\in\mathbb R^{r\times1}$,
\begin{equation*}
\phi(\Pm)=\mathbf u\mathbf N_{\mathbf B,\mathbf C}(\Pm)\mathbf v
\end{equation*}
has limiting variable
\begin{equation*}
\mathbf u\mathbf N_{\mathbf B,\mathbf C}(\Pm)
\mathbf B\Wm_{\!\Pm}\mathbf C
\mathbf N_{\mathbf B,\mathbf C}(\Pm)\mathbf v.
\end{equation*}
\end{proposition}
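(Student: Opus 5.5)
The plan is to apply Theorem \ref{theo:matrix_space_delta} to the map $\Phi(\mathbf M)=\mathbf N_{\mathbf B,\mathbf C}(\mathbf M)=(\mathbf I_r-\mathbf B\mathbf M\mathbf C)^{-1}$. That map is defined on an open neighbourhood $\mathcal U$ of $\Pm$ in $\mathbb R^{s\times s}$, and no stochasticity is needed for the definition itself.

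The first step is to produce $\mathcal U$. Since $\rho(\mathbf B\Pm\mathbf C)<1$, the matrix $\mathbf I_r-\mathbf B\Pm\mathbf C$ is non-singular. The determinant is continuous in $\mathbf M$, so $\mathcal U=\{\mathbf M:\det(\mathbf I_r-\mathbf B\mathbf M\mathbf C)\neq0\}$ is an open set containing $\Pm$. If one also wants the spectral radius condition to persist, eigenvalues depend continuously on the entries, so $\rho(\mathbf B\mathbf M\mathbf C)<1$ on a smaller open neighbourhood. However, only invertibility is needed here. On $\mathcal U$, $\Phi$ is a rational function of the entries of $\mathbf M$ by Cramer's rule. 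Hence it is $C^\infty$, and in fact real analytic, which also places it under parts (ii)--(iii) of the theorem.

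The second step computes the derivative with Proposition \ref{prop:product_inverse_rules}. Write $\Psi(\mathbf M)=\mathbf I_r-\mathbf B\mathbf M\mathbf C$. This map is affine with $\Psi'_{\Pm}(\mathbf H)=-\mathbf B\mathbf H\mathbf C$. The inverse rule then gives
\[
\mathbf N'_{\mathbf B,\mathbf C;\Pm}(\mathbf H)=-\Psi(\Pm)^{-1}(-\mathbf B\mathbf H\mathbf C)\Psi(\Pm)^{-1},
\]
which is \eqref{eq:resolvent_derivative}. For completeness one can check Fréchet differentiability directly through the resolvent identity
\[
\mathbf N(\Pm+\mathbf H)-\mathbf N(\Pm)=\mathbf N(\Pm+\mathbf H)\mathbf B\mathbf H\mathbf C\mathbf N(\Pm).
\]
Continuity of $\mathbf N$ near $\Pm$ then gives a remainder of order $O(\lVert\mathbf H\rVert^2)$.

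The third step applies Theorem \ref{theo:matrix_space_delta}(i), with $\phi$ the restriction of $\Phi$ to $\mathcal U\cap\mathcal P_{\rm st}$. Substituting $\Wm_{\!\Pm}$ into the derivative yields \eqref{eq:resolvent_limit}. For the scalar functional $\mathbf u\mathbf N\mathbf v$, the map $\mathbf Y\mapsto\mathbf u\mathbf Y\mathbf v$ is linear and fixed. Composing it with $\Phi$ gives a Fréchet differentiable map with derivative $\mathbf u\mathbf N'_{\Pm}(\mathbf H)\mathbf v$, so the theorem applies again, or equivalently the continuous mapping theorem does. This is the same reasoning as in Proposition \ref{prop:linear_covariance}.

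I expect no serious obstacle. The only point needing care is the first step, namely that the neighbourhood on which the inverse exists is open and contains $\Pm$. This is immediate from the hypothesis $\rho(\mathbf B\Pm\mathbf C)<1$, which excludes the eigenvalue $1$.
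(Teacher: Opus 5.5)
Your proposal is correct and follows the paper's proof. You set $\Psi(\mathbf M)=\mathbf I_r-\mathbf B\mathbf M\mathbf C$, apply the inverse rule of Proposition~\ref{prop:product_inverse_rules} to get \eqref{eq:resolvent_derivative}, and then invoke Theorem~\ref{theo:matrix_space_delta}(i), with fixed left and right multiplication for the scalar form. Your extra steps, namely the explicit open neighbourhood where the inverse exists and the direct Fr\'echet check via the resolvent identity, only spell out what the paper leaves implicit.
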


\begin{proof}
Let $\Psi(\mathbf M)=\mathbf I_r-\mathbf B\mathbf M\mathbf C$. Then
\begin{equation*}
\Psi'_{\Pm}(\mathbf H)=-\mathbf B\mathbf H\mathbf C.
\end{equation*}
Since $\rho(\mathbf B\Pm\mathbf C)<1$, the matrix $\Psi(\Pm)$ is invertible and its inverse is $\mathbf N_{\mathbf B,\mathbf C}(\Pm)$. The inverse rule gives
\begin{equation*}
(\Psi^{-1})'_{\Pm}(\mathbf H)
=-\Psi(\Pm)^{-1}\Psi'_{\Pm}(\mathbf H)\Psi(\Pm)^{-1},
\end{equation*}
which is exactly \eqref{eq:resolvent_derivative}. The convergence \eqref{eq:resolvent_limit} follows by applying Theorem \ref{theo:matrix_space_delta} to this differentiable inverse map. The scalar formula is obtained by fixed left and right multiplication.
\end{proof}

The stationary distribution is another inverse-type functional, but it is attached to the full chain rather than to a transient block. Let $\mathbf A=\mathbf 1_s\mathbf 1_s^\top$. For $\Pm\in\mathcal P_{\rm ir}$,
\begin{equation}
\label{eq:stationary_distribution}
\boldsymbol\pi=
\mathbf 1_s^\top(\mathbf I_s-\Pm+\mathbf A)^{-1}.
\end{equation}

\begin{proposition}
\label{prop:stationary_vector}
The plug-in estimator $\widehat{\boldsymbol\pi}(n)=\mathbf 1_s^\top(\mathbf I_s-\widehat\Pm(n)+\mathbf A)^{-1}$ is strongly consistent and
\begin{equation}
\label{asympt_law_stationary}
\sqrt n\left\{\widehat{\boldsymbol\pi}(n)-\boldsymbol\pi\right\}
\xrightarrow[n\rightarrow\infty]{d}
\Wm_{\!\boldsymbol\pi}:=
\boldsymbol\pi\Wm_{\!\Pm}(\mathbf I_s-\Pm+\mathbf A)^{-1}.
\end{equation}
Moreover, if
\begin{equation*}
\mathbf M_{\boldsymbol\pi^\top}=
\boldsymbol\pi\otimes(\mathbf I_s-\Pm^\top+\mathbf A)^{-1},
\end{equation*}
then
\begin{equation}
\label{vec_pi}
\Wm_{\!\boldsymbol\pi^\top}:=\vecc(\Wm_{\!\boldsymbol\pi}^\top)=
\mathbf M_{\boldsymbol\pi^\top}\Wm_{\!\pv}
\sim
\mathcal N_s(\mathbf 0_s,\Sm_{\boldsymbol\pi^\top}),
\end{equation}
where
\begin{equation}
\label{eq:cov_stationary}
\Sm_{\boldsymbol\pi^\top}=
\mathbf M_{\boldsymbol\pi^\top}\Sm_{\pv}\mathbf M_{\boldsymbol\pi^\top}^\top.
\end{equation}
\end{proposition}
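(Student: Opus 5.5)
The plan is to apply Theorem~\ref{theo:matrix_space_delta}(i) to the representation
\begin{equation*}
\Phi(\mathbf M)=\mathbf 1_s^\top(\mathbf I_s-\mathbf M+\mathbf A)^{-1},
\end{equation*}
defined on an open neighbourhood $\mathcal U$ of $\Pm$ on which $\mathbf I_s-\mathbf M+\mathbf A$ is non-singular. Such a neighbourhood exists by continuity of the determinant. Non-singularity at $\Pm\in\mathcal P_{\rm ir}$ is the characterization recalled in the proof of Proposition~\ref{lem:topology_classes}.

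First I would check that $\Phi$ represents the stationary functional on $\mathcal U\cap\mathcal P_{\rm st}$. By Proposition~\ref{lem:topology_classes} we may shrink $\mathcal U$ so that every stochastic $\mathbf Q$ in it lies in $\mathcal P_{\rm ir}$ and has a unique stationary row vector $\boldsymbol\pi_{\mathbf Q}$. Then $\boldsymbol\pi_{\mathbf Q}\mathbf Q=\boldsymbol\pi_{\mathbf Q}$ and $\boldsymbol\pi_{\mathbf Q}\mathbf A=\mathbf 1_s^\top$. Hence $\boldsymbol\pi_{\mathbf Q}(\mathbf I_s-\mathbf Q+\mathbf A)=\mathbf 1_s^\top$, which gives \eqref{eq:stationary_distribution} at $\mathbf Q$. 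Strong consistency then follows from the continuity of $\Phi$ and the general consistency statement of Theorem~\ref{theo:matrix_space_delta}.

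Second, I would differentiate. Set $\Psi(\mathbf M)=\mathbf I_s-\mathbf M+\mathbf A$, so $\Psi'_{\Pm}(\mathbf H)=-\mathbf H$, and write $\mathbf R_{\Pm}=(\mathbf I_s-\Pm+\mathbf A)^{-1}$. Proposition~\ref{prop:product_inverse_rules} gives
\begin{equation*}
\Phi'_{\Pm}(\mathbf H)=\mathbf 1_s^\top\mathbf R_{\Pm}\mathbf H\mathbf R_{\Pm}=\boldsymbol\pi\mathbf H\mathbf R_{\Pm},
\end{equation*}
using $\mathbf 1_s^\top\mathbf R_{\Pm}=\boldsymbol\pi$. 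Theorem~\ref{theo:matrix_space_delta}(i) then yields \eqref{asympt_law_stationary}.

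The vector form comes from Proposition~\ref{prop:linear_covariance} with $m=1$, $\mathbf B_1=\boldsymbol\pi$ and $\mathbf A_1=\mathbf R_{\Pm}$. Equation \eqref{eq:explicit_L_phi} gives
\begin{equation*}
\mathbf L_\Phi=\boldsymbol\pi\otimes\mathbf R_{\Pm}^\top=\boldsymbol\pi\otimes(\mathbf I_s-\Pm^\top+\mathbf A)^{-1},
\end{equation*}
since $\mathbf A^\top=\mathbf A$. This is $\mathbf M_{\boldsymbol\pi^\top}$. Here $\vecc(\Wm_{\!\boldsymbol\pi}^\top)$ is simply the column $\Wm_{\!\boldsymbol\pi}^\top$, because $q=1$. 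Gaussianity holds because the limit is a linear image of $\Wm_{\!\pv}$, and the covariance \eqref{eq:cov_stationary} is \eqref{eq:linear_covariance}.

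The only delicate step is the representation on a full neighbourhood rather than only at $\Pm$. Both the non-singularity of $\mathbf I_s-\mathbf M+\mathbf A$ and irreducibility must persist for nearby stochastic matrices. Proposition~\ref{lem:topology_classes} handles this. By Theorem~\ref{theo:matrix_space_delta}(iv), the answer does not depend on using $\mathbf A$ rather than another rank-one completion.
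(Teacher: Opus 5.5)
Your proposal is correct and follows the paper's proof essentially step for step: the same map $\Psi(\mathbf M)=\mathbf I_s-\mathbf M+\mathbf A$, the inverse rule giving $(\mathbf I_s-\Pm+\mathbf A)^{-1}\mathbf H(\mathbf I_s-\Pm+\mathbf A)^{-1}$, the identity $\mathbf 1_s^\top(\mathbf I_s-\Pm+\mathbf A)^{-1}=\boldsymbol\pi$, and the Kronecker vectorization, which you route through Proposition~\ref{prop:linear_covariance} where the paper cites the vectorization lemma of Appendix~\ref{app:kronecker} directly. One addition of yours is useful: you check explicitly, via Proposition~\ref{lem:topology_classes}, that the formula gives the stationary vector of every stochastic matrix near $\Pm$, which is the representation hypothesis of Theorem~\ref{theo:matrix_space_delta}; the paper leaves this implicit.
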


\begin{proof}
For an irreducible finite chain, $\mathbf I_s-\Pm+\mathbf A$ is non-singular and
\begin{equation*}
\boldsymbol\pi=\mathbf{1}_s^\top(\mathbf I_s-\Pm+\mathbf A)^{-1}.
\end{equation*}
Set $\Psi(\mathbf M)=\mathbf I_s-\mathbf M+\mathbf A$. Then $\Psi'_{\Pm}(\mathbf H)=-\mathbf H$. By the inverse rule,
\begin{equation*}
(\Psi^{-1})'_{\Pm}(\mathbf H)
=(\mathbf I_s-\Pm+\mathbf A)^{-1}\mathbf H(\mathbf I_s-\Pm+\mathbf A)^{-1}.
\end{equation*}
Multiplication by $\mathbf{1}_s^\top$ on the left gives the derivative of the stationary vector map. Since $\mathbf{1}_s^\top(\mathbf I_s-\Pm+\mathbf A)^{-1}=\boldsymbol\pi$, insertion of $\Wm_{\!\Pm}$ yields \eqref{asympt_law_stationary}. Finally, Lemma \ref{lem: vec_propereties} gives the row-wise vector form and hence the covariance matrix in \eqref{eq:cov_stationary}.
\end{proof}


\subsection{Further Markov characteristics}
\label{subsec:further_characteristics}

The same calculus also gives compact asymptotic laws for quantities that summarize the long-run behaviour of a finite Markov chain and are not primarily reliability indicators. We record three examples. The first is the asymptotic variance in the central limit theorem for an additive functional. The second is Kemeny's constant, which is a global characteristic of the transition mechanism. The third is the entropy rate, a basic information-theoretic characteristic of the chain.

Let $\mathbf g=(g(1),\ldots,g(s))^\top$ be fixed and let $\mu_g=\boldsymbol\pi\mathbf g$. Put
\begin{equation*}
\mathbf g_0=\mathbf g-\mu_g\mathbf 1_s,
\qquad
\mathbf D_{\mathbf g_0}=\diag(g_0(1),\ldots,g_0(s)),
\qquad
\mathbf R_{\pi}=(\mathbf I_s-\Pm+\mathbf 1_s\boldsymbol\pi)^{-1}.
\end{equation*}
Let $\mathbf h_g=\mathbf R_{\pi}\mathbf g_0$. The asymptotic variance in the Markov chain central limit theorem for $\sum_{m=0}^{n-1}\{g(X_m)-\mu_g\}$ is
\begin{equation}
\label{eq:additive_variance_functional}
\phi_g(\Pm)=2\boldsymbol\pi\mathbf D_{\mathbf g_0}\mathbf h_g-\boldsymbol\pi\mathbf D_{\mathbf g_0}\mathbf g_0.
\end{equation}
This is the usual Poisson-equation representation of the variance constant, written in a form suited to differentiation with respect to $\Pm$. If
\begin{equation*}
\mathbf Z_{\Pm}=
(\mathbf I_s-\Pm+\mathbf 1_s\boldsymbol\pi)^{-1}
-
\mathbf 1_s\boldsymbol\pi,
\end{equation*}
then $\mathbf h_g=\mathbf Z_{\Pm}\mathbf g_0$ and
\begin{equation*}
\phi_g(\Pm)
=
\boldsymbol\pi(\mathbf g_0^{\circ 2})
+
2\boldsymbol\pi\mathbf D_{\mathbf g_0}
(\mathbf Z_{\Pm}-\mathbf I_s)\mathbf g_0.
\end{equation*}
This is the standard variance constant in the central limit theorem for additive functionals of finite Markov chains.

\begin{proposition}
\label{prop:further_markov_characteristics}
Let $\Pm\in\mathcal P_{\rm ir}$. For the additive-functional variance $\phi_g$ in \eqref{eq:additive_variance_functional},
\begin{equation}
\label{eq:additive_variance_limit}
\sqrt n\{\phi_g(\widehat\Pm(n))-\phi_g(\Pm)\}
\xrightarrow[n\rightarrow\infty]{d}
\phi'_{g,\Pm}(\Wm_{\!\Pm}),
\end{equation}
where, for $\mathbf H\in\mathcal T_{\Pm}$,
\begin{equation}
\label{eq:additive_variance_derivative}
\phi'_{g,\Pm}(\mathbf H)=
\boldsymbol\pi\mathbf H\mathbf R_{\pi}
\left(2\mathbf D_{\mathbf g_0}\mathbf h_g-\mathbf D_{\mathbf g_0}\mathbf g_0\right)
+2\boldsymbol\pi\mathbf D_{\mathbf g_0}\mathbf R_{\pi}\mathbf H\mathbf h_g.
\end{equation}

The Kemeny constant
\begin{equation*}
K(\Pm)=\tr\{(\mathbf I_s-\Pm+\mathbf 1_s\boldsymbol\pi)^{-1}\}
\end{equation*}
satisfies
\begin{equation}
\label{eq:kemeny_limit}
\sqrt n\{K(\widehat\Pm(n))-K(\Pm)\}
\xrightarrow[n\rightarrow\infty]{d}
\tr(\mathbf R_{\pi}\Wm_{\!\Pm}\mathbf R_{\pi}).
\end{equation}

Assume finally that all transition probabilities are positive. The entropy rate
\begin{equation*}
\mathcal H(\Pm)=-\sum_{i=1}^s\pi_i\sum_{j=1}^s p_{ij}\log p_{ij}
\end{equation*}
satisfies
\begin{equation}
\label{eq:entropy_limit}
\sqrt n\{\mathcal H(\widehat\Pm(n))-\mathcal H(\Pm)\}
\xrightarrow[n\rightarrow\infty]{d}
\boldsymbol\pi\Wm_{\!\Pm}\mathbf R_{\pi}\mathbf r_{\Pm}
-\sum_{i=1}^s\pi_i\sum_{j=1}^s W_{ij}\log p_{ij},
\end{equation}
where $\mathbf r_{\Pm}\in\mathbb R^{s}$ has entries
\begin{equation*}
(\mathbf r_{\Pm})_i=-\sum_{j=1}^s p_{ij}\log p_{ij}.
\end{equation*}
\end{proposition}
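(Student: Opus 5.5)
All three assertions will follow from Theorem~\ref{theo:matrix_space_delta}(i). For each functional we write a smooth matrix representation $\Phi$ on a neighbourhood of $\Pm$ and compute its Fréchet derivative with Proposition~\ref{prop:product_inverse_rules}. By Theorem~\ref{theo:matrix_space_delta}(iv), we may then simplify the derivative using only directions $\mathbf H\in\mathcal T_{\Pm}$, for which $\mathbf H\mathbf 1_s=\mathbf 0_s$.

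The common ingredient is the stationary vector. By Proposition~\ref{prop:stationary_vector} and the identity \eqref{eq:stationary_distribution}, the map $\mathbf M\mapsto\boldsymbol\pi(\mathbf M)=\mathbf 1_s^\top(\mathbf I_s-\mathbf M+\mathbf A)^{-1}$ is analytic near $\Pm$, with derivative $\boldsymbol\pi\mathbf H(\mathbf I_s-\Pm+\mathbf A)^{-1}$. We first replace $(\mathbf I_s-\Pm+\mathbf A)^{-1}$ by $\mathbf R_\pi$ on $\mathcal T_{\Pm}$. Put $\mathbf R_{\Pm}=(\mathbf I_s-\Pm+\mathbf A)^{-1}$. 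A short computation using $\boldsymbol\pi\mathbf 1_s=1$ and $\boldsymbol\pi\Pm=\boldsymbol\pi$ gives $\mathbf R_{\Pm}=\mathbf R_\pi-\tfrac{1}{s}\mathbf 1_s\boldsymbol\pi$: multiply $(\mathbf I_s-\Pm+\mathbf 1_s\mathbf 1_s^\top)$ by the candidate and use $\mathbf R_\pi\mathbf 1_s=\mathbf 1_s$ and $\boldsymbol\pi\mathbf R_\pi=\boldsymbol\pi$. Since $\mathbf H\mathbf 1_s=\mathbf 0_s$, we get $\mathbf H\mathbf R_{\Pm}=\mathbf H\mathbf R_\pi$. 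Hence $\boldsymbol\pi'_{\Pm}(\mathbf H)=\boldsymbol\pi\mathbf H\mathbf R_\pi$ on $\mathcal T_{\Pm}$.

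Next we differentiate $\mathbf R_\pi(\mathbf M)=(\mathbf I_s-\mathbf M+\mathbf 1_s\boldsymbol\pi(\mathbf M))^{-1}$. By the inverse rule,
$\mathbf R_\pi'(\mathbf H)=\mathbf R_\pi\{\mathbf H-\mathbf 1_s\boldsymbol\pi\mathbf H\mathbf R_\pi\}\mathbf R_\pi$.
Using $\mathbf R_\pi\mathbf 1_s=\mathbf 1_s$, this becomes $\mathbf R_\pi\mathbf H\mathbf R_\pi-\mathbf 1_s\boldsymbol\pi\mathbf H\mathbf R_\pi^2$.

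\emph{Kemeny constant.} The trace of the derivative is $\tr(\mathbf R_\pi\mathbf H\mathbf R_\pi)-\boldsymbol\pi\mathbf H\mathbf R_\pi^2\mathbf 1_s$. The last term vanishes because $\mathbf R_\pi^2\mathbf 1_s=\mathbf 1_s$ and $\mathbf H\mathbf 1_s=\mathbf 0_s$. This gives \eqref{eq:kemeny_limit}.

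\emph{Entropy rate.} Write $\mathcal H(\Pm)=\boldsymbol\pi(\Pm)\mathbf r(\Pm)$, with $\mathbf r$ smooth on the open set of positive matrices. The derivative of $\mathbf r$ is $-\sum_j h_{ij}(\log p_{ij}+1)$. The ``$+1$'' contribution vanishes on $\mathcal T_{\Pm}$ because the rows of $\mathbf H$ sum to zero. The product rule then gives $\boldsymbol\pi\mathbf H\mathbf R_\pi\mathbf r_{\Pm}-\sum_i\pi_i\sum_j h_{ij}\log p_{ij}$, which is \eqref{eq:entropy_limit}.

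\emph{Additive-functional variance.} This is the most bookkeeping-heavy case, and I expect it to be the main obstacle, because $\mathbf g_0$, $\mathbf D_{\mathbf g_0}$, $\mathbf h_g$ and $\boldsymbol\pi$ all depend on $\Pm$. The derivatives are:
\begin{itemize}
\item $\dot\mu=\boldsymbol\pi\mathbf H\mathbf R_\pi\mathbf g$, $\dot{\mathbf g}_0=-\dot\mu\mathbf 1_s$ and $\dot{\mathbf D}_{\mathbf g_0}=-\dot\mu\mathbf I_s$;
\item $\dot{\mathbf h}_g=\mathbf R_\pi'(\mathbf H)\mathbf g_0+\mathbf R_\pi\dot{\mathbf g}_0$.
\end{itemize}
I would then collect the terms proportional to $\dot\mu$. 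They should cancel, using $\boldsymbol\pi\mathbf g_0=0$ and $\mathbf R_\pi\mathbf 1_s=\mathbf 1_s$, together with $\boldsymbol\pi\mathbf h_g=\boldsymbol\pi\mathbf R_\pi\mathbf g_0=\boldsymbol\pi\mathbf g_0=0$. The term $-\mathbf 1_s\boldsymbol\pi\mathbf H\mathbf R_\pi^2\mathbf g_0$ in $\dot{\mathbf h}_g$ contributes $\boldsymbol\pi\mathbf D_{\mathbf g_0}\mathbf 1_s=\boldsymbol\pi\mathbf g_0=0$ after premultiplication. What remains are the stationary-vector variation $\boldsymbol\pi\mathbf H\mathbf R_\pi(2\mathbf D_{\mathbf g_0}\mathbf h_g-\mathbf D_{\mathbf g_0}\mathbf g_0)$ and the term $2\boldsymbol\pi\mathbf D_{\mathbf g_0}\mathbf R_\pi\mathbf H\mathbf R_\pi\mathbf g_0$. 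Since $\mathbf R_\pi\mathbf g_0=\mathbf h_g$, these give exactly \eqref{eq:additive_variance_derivative}.

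Finally, each functional is a composition of analytic maps near $\Pm$ ($\log$ only at positive entries), so Theorem~\ref{theo:matrix_space_delta}(i) applies with $\Wm_{\!\Pm}\in\mathcal T_{\Pm}$ almost surely. Strong consistency follows from the theorem as well.
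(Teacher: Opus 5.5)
Your route is the paper's route: apply Theorem~\ref{theo:matrix_space_delta}(i), use the stationary-vector derivative $\boldsymbol\pi\mathbf H\mathbf R_\pi$ on $\mathcal T_{\Pm}$, apply the inverse rule to $\mathbf R_\pi$, and then use the cancellations coming from $\mathbf H\mathbf 1_s=\mathbf 0_s$, $\boldsymbol\pi\mathbf g_0=0$ and $\boldsymbol\pi\mathbf h_g=0$. Your term-by-term bookkeeping for $\phi_g$ is correct and reproduces \eqref{eq:additive_variance_derivative}, and it is more explicit than the paper's sketch. The handling of the Kemeny trace term and of the entropy ``$+1$'' term is also correct.

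One intermediate step is false as written. The identity $\mathbf R_{\Pm}=\mathbf R_\pi-\tfrac1s\mathbf 1_s\boldsymbol\pi$ does not hold. Since $(\mathbf I_s-\Pm+\mathbf A)\mathbf 1_s=s\mathbf 1_s$, one has $\mathbf R_{\Pm}\mathbf 1_s=s^{-1}\mathbf 1_s$, whereas your candidate sends $\mathbf 1_s$ to $(1-s^{-1})\mathbf 1_s$. The identity therefore fails whenever $s\neq2$, and the verification you describe cannot close. The correct relation follows from the resolvent identity together with $\mathbf R_\pi\mathbf 1_s=\mathbf 1_s$:
\begin{equation*}
\mathbf R_{\Pm}-\mathbf R_\pi=\mathbf R_\pi\mathbf 1_s(\boldsymbol\pi-\mathbf 1_s^\top)\mathbf R_{\Pm}=\mathbf 1_s(\boldsymbol\pi-\mathbf 1_s^\top)\mathbf R_{\Pm}.
\end{equation*}
Equivalently, by Sherman--Morrison, $\mathbf R_{\Pm}=\mathbf R_\pi-s^{-1}\mathbf 1_s(\mathbf 1_s^\top\mathbf R_\pi-\boldsymbol\pi)$. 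The difference is still of the form $\mathbf 1_s\mathbf v$ for a row vector $\mathbf v$. Hence $\mathbf H\mathbf R_{\Pm}=\mathbf H\mathbf R_\pi$ for $\mathbf H\in\mathcal T_{\Pm}$, which is all you actually use, and everything downstream stands.

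A shorter alternative is to differentiate $\boldsymbol\pi(\mathbf I_s-\Pm)=\mathbf 0$ and $\boldsymbol\pi\mathbf 1_s=1$ along $\mathcal T_{\Pm}$. This gives $\dot{\boldsymbol\pi}(\mathbf I_s-\Pm+\mathbf 1_s\boldsymbol\pi)=\boldsymbol\pi\mathbf H$ directly.

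With this repair your proof is complete. The paper itself simply asserts that the derivative of $\boldsymbol\pi$ is $\boldsymbol\pi\mathbf H\mathbf R_\pi$, without reconciling it with the form $\boldsymbol\pi\mathbf H(\mathbf I_s-\Pm+\mathbf A)^{-1}$ of Proposition~\ref{prop:stationary_vector}. The corrected step is therefore a useful addition rather than a redundancy.
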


\begin{proof}
The matrix $\mathbf h_g$ is the solution of the Poisson equation $(\mathbf I_s-\Pm)\mathbf h_g=\mathbf g_0$ satisfying $\boldsymbol\pi\mathbf h_g=0$. Hence \eqref{eq:additive_variance_functional} is the standard variance constant of the additive-functional central limit theorem. The derivative of the stationary vector is $\boldsymbol\pi\mathbf H\mathbf R_{\pi}$. Since $\mathbf H\mathbf 1_s=\mathbf 0_s$, the derivative of $\mu_g$ is $\boldsymbol\pi\mathbf H\mathbf R_{\pi}\mathbf g=\boldsymbol\pi\mathbf H\mathbf h_g$. The terms obtained by differentiating $\mathbf g_0$ disappear after multiplication by $\boldsymbol\pi$, because $\boldsymbol\pi\mathbf g_0=0$ and $\boldsymbol\pi\mathbf h_g=0$. Differentiating $\mathbf h_g=\mathbf R_{\pi}\mathbf g_0$ and using the same cancellations gives \eqref{eq:additive_variance_derivative}. The limit \eqref{eq:additive_variance_limit} follows from Theorem \ref{theo:matrix_space_delta}.

For $K$, the derivative of $\mathbf R_{\pi}$ in the direction $\mathbf H$ is $\mathbf R_{\pi}\mathbf H\mathbf R_{\pi}$ when evaluated on $\mathcal T_{\Pm}$; the term coming from the derivative of $\boldsymbol\pi$ has zero trace because $\mathbf R_{\pi}\mathbf 1_s=\mathbf 1_s$ and $\boldsymbol\pi\mathbf H\mathbf 1_s=0$. This proves \eqref{eq:kemeny_limit}.

For the entropy rate, write $\mathcal H(\Pm)=\boldsymbol\pi\mathbf r_{\Pm}$. The derivative of $\boldsymbol\pi$ gives the first term in \eqref{eq:entropy_limit}. Since every row of $\mathbf H$ sums to zero, differentiating $-\sum_j p_{ij}\log p_{ij}$ gives $-\sum_j h_{ij}\log p_{ij}$, which yields the second term. The positivity assumption guarantees differentiability of the logarithm at all entries.
\end{proof}


\section{Reliability functionals and Markov characteristics}
\label{sec: reliability}

The preceding propositions give a common approach to the asymptotic distributions of many quantities used to describe the performance of a Markov system. We present here the probabilistic definitions of the reliability quantities of interest.  The asymptotic derivations, however, no longer need to be carried out separately for each indicator. Once a characteristic is written as a power, a product, a resolvent or a scalar linear form, its plug-in MLE is handled by the same limiting Gaussian matrix $\Wm_{\!\Pm}$. This produces the limit law directly in probabilistic form. The covariance matrices required for confidence intervals, joint confidence regions and tests are then obtained from the operators of Section \ref{sec: matrix_calculus}. Thus the results of this section should be understood as a list of explicit instances of the general calculus rather than as a collection of independent delta-method calculations.

Let the state space be partitioned into $U=\{1,\ldots,r\}$ and $D=\{r+1,\ldots,s\}$, interpreted as working and failed states. Let $\mathbf E_U\in\mathbb R^{s\times r}$ and $\mathbf E_D\in\mathbb R^{s\times(s-r)}$ be the corresponding selection matrices. Define
\begin{equation*}
\Pm_{UU}=\mathbf E_U^\top\Pm\mathbf E_U,
\quad
\Pm_{UD}=\mathbf E_U^\top\Pm\mathbf E_D,
\quad
\Pm_{DD}=\mathbf E_D^\top\Pm\mathbf E_D,
\end{equation*}
\begin{equation*}
\mathbf a_U=\mathbf a\mathbf E_U,
\quad
\mathbf a_D=\mathbf a\mathbf E_D,
\quad
\mathbf e_U=\mathbf E_U\mathbf 1_r,
\quad
\mathbf e_D=\mathbf E_D\mathbf 1_{s-r},
\quad
\mathbf{J}_U=\mathbf E_U\mathbf E_U^\top.
\end{equation*}
The initial law $\mathbf a$ is assumed known, as it cannot be estimated consistently from a single trajectory.

\subsection{Probabilistic definitions}
\label{subsec:prob_definitions}

The availability at time $k$ is
\begin{equation}
\label{eq: availability}
A_k=\mathbb P(X_k\in U)=\mathbf a\Pm^k\mathbf e_U.
\end{equation}
The availability curve on $\{0,\ldots,m\}$ is the vector $(A_0,\ldots,A_m)^\top$, which is the curve \eqref{eq:general_curve} with $\mathbf c=\mathbf e_U$.

Let
\begin{equation*}
T_D=\inf\{n\geq0:X_n\in D\}.
\end{equation*}
The reliability function, the failure-time probability and the mean time to failure are
\begin{align}
R_k&=\mathbb P(T_D>k)=\mathbf a_U\Pm_{UU}^k\mathbf 1_r,
\label{eq:reliability_curve}\\
f_k&=\mathbb P(T_D=k)=\mathbf a_U\Pm_{UU}^{k-1}\Pm_{UD}\mathbf 1_{s-r},\qquad k\geq1,
\label{eq:failure_density}\\
\operatorname{MTTF}&=\mathbb E(T_D)=
\mathbf a_U(\mathbf I_r-\Pm_{UU})^{-1}\mathbf 1_r,
\label{eq: MTTF}
\end{align}
whenever $\rho(\Pm_{UU})<1$. In an irreducible chain with $U$ and $D$ nonempty, this condition holds.

Similarly, with
\begin{equation*}
T_U=\inf\{n\geq0:X_n\in U\},
\end{equation*}
the down-state survival curve and the mean time to repair are
\begin{align}
Q_k&=\mathbb P(T_U>k)=\mathbf a_D\Pm_{DD}^k\mathbf 1_{s-r},
\label{eq:down_survival}\\
\operatorname{MTTR}&=\mathbb E(T_U)=
\mathbf a_D(\mathbf I_{s-r}-\Pm_{DD})^{-1}\mathbf 1_{s-r},
\label{eq: MTTR}
\end{align}
whenever $\rho(\Pm_{DD})<1$.

The discrete-time intensity of hitting $D$ at time $k\geq1$ is the discrete analogue of the rate of occurrence of failures. Related forms of this indicator have been studied for semi-Markov and hidden Markov renewal models in \cite{votsi2014,votsi2015,votsi2019}. It is defined here by
\begin{equation}
\label{eq:DTIHT}
r_k=\mathbb P(X_{k-1}\in U,X_k\in D)
=\mathbf a\Pm^{k-1}\mathbf{J}_U\Pm\mathbf e_D.
\end{equation}
The stationary availability is
\begin{equation}
\label{eq:stationary_availability}
A_\infty=\boldsymbol\pi\mathbf e_U.
\end{equation}

\subsection{Asymptotic representations}
\label{subsec:reliability_asymptotics}

The MLEs are obtained by replacing $\Pm$ by $\widehat\Pm(n)$ in the preceding formulas. Element-wise asymptotic expressions for several reliability indicators were derived in \cite{sadek_limnios}. The point here is different: each indicator is first written as a simple matrix functional, and the limiting random variable is obtained by inserting the same limiting Gaussian matrix $\Wm_{\!\Pm}$ into the corresponding derivative. The formulas below are therefore displayed in their natural probabilistic form. When a numerical covariance matrix is required for confidence intervals, confidence regions or tests, it is recovered from the vector operators of Section \ref{sec: matrix_calculus}. This gives a common approach to marginal and joint inference for several times and several characteristics.

\begin{proposition}
\label{prop:availability_curve}
For every fixed $m\geq0$,
\begin{equation}
\label{eq:availability_curve_limit}
\sqrt n\left\{(\widehat A_0(n),\ldots,\widehat A_m(n))^\top-(A_0,\ldots,A_m)^\top\right\}
\xrightarrow[n\rightarrow\infty]{d}
\mathbf Z_A,
\end{equation}
where
\begin{equation*}
(\mathbf Z_A)_k=\mathbf a\Wm_{\!\Pm^k}\mathbf e_U,
\qquad k=0,
\ldots,m.
\end{equation*}
Its covariance matrix is given by \eqref{eq:curve_cov} with $\mathbf c=\mathbf e_U$. In particular, the cumulative availability
\begin{equation*}
C_m=\sum_{k=0}^m A_k
\end{equation*}
has limiting variable
\begin{equation*}
\sum_{k=0}^m\mathbf a\Wm_{\!\Pm^k}\mathbf e_U.
\end{equation*}
\end{proposition}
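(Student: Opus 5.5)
The plan is to recognize the availability curve as the special case $\mathbf c=\mathbf e_U$ of the general curve \eqref{eq:general_curve}. We then read off everything from Proposition~\ref{prop:markov_curves}, and obtain the cumulative availability by a fixed linear map together with the continuous mapping theorem.

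\textbf{Identifying the curve.} By \eqref{eq: availability}, $A_k=\mathbf a\Pm^k\mathbf e_U$. Hence $(A_0,\ldots,A_m)^\top=\Gamma_m(\Pm;\mathbf a,\mathbf e_U)$. The plug-in estimator is $\widehat A_k(n)=\mathbf a\widehat\Pm(n)^k\mathbf e_U$, so the estimated vector is $\Gamma_m(\widehat\Pm(n);\mathbf a,\mathbf e_U)$. No further regularity is needed, because powers are polynomial maps defined on all of $\mathbb R^{s\times s}$. Theorem~\ref{theo:matrix_space_delta} therefore applies with $\mathcal U=\mathbb R^{s\times s}$.

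\textbf{Joint limit and covariance.} Proposition~\ref{prop:markov_curves} with $\mathbf c=\mathbf e_U$ gives \eqref{eq:availability_curve_limit} with $(\mathbf Z_A)_k=\mathbf a\Wm_{\!\Pm^k}\mathbf e_U$. The same proposition gives the covariance \eqref{eq:curve_cov} with $\mathbf c=\mathbf e_U$. The coordinate $k=0$ is deterministic, since $A_0=\mathbf a\mathbf e_U$. It is handled by the convention $\Wm_{\!\Pm^0}=\mathbf 0$ and $\mathbf M_0=\mathbf 0$ recorded after Proposition~\ref{prop:powers}.

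\textbf{Cumulative availability.} Write $C_m=\mathbf 1_{m+1}^\top(A_0,\ldots,A_m)^\top$, and similarly $\widehat C_m(n)=\mathbf 1_{m+1}^\top(\widehat A_0(n),\ldots,\widehat A_m(n))^\top$. The map $\mathbf x\mapsto\mathbf 1_{m+1}^\top\mathbf x$ is linear and continuous. Applying the continuous mapping theorem to \eqref{eq:availability_curve_limit} therefore yields the limit $\sum_{k=0}^m(\mathbf Z_A)_k=\sum_{k=0}^m\mathbf a\Wm_{\!\Pm^k}\mathbf e_U$.

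Alternatively, $C_m=\mathbf a\bigl(\sum_{k=0}^m\Pm^k\bigr)\mathbf e_U$ is a polynomial functional, and one could apply Theorem~\ref{theo:matrix_space_delta}(i) directly. Its derivative is $\sum_{k}\Phi'_{k,\Pm}$ by \eqref{eq:derivative_power}. Its variance is $(\mathbf a\otimes\mathbf e_U^\top)\bigl(\sum_k\mathbf M_k\bigr)\Sm_{\pv}\bigl(\sum_k\mathbf M_k\bigr)^\top(\mathbf a^\top\otimes\mathbf e_U)$, obtained from Proposition~\ref{prop:linear_covariance}.

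\textbf{Main obstacle.} There is essentially no obstacle. The only point requiring care is bookkeeping for the deterministic $k=0$ term, and checking that the joint (not merely marginal) convergence is inherited, which the Cram\'er--Wold step in Proposition~\ref{prop:markov_curves} already provides.
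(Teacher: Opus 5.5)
Your proposal is correct and follows the paper's own proof: you identify the availability curve as the Markov curve \eqref{eq:general_curve} with $\mathbf c=\mathbf e_U$, invoke Proposition~\ref{prop:markov_curves} for the joint limit and covariance, and obtain the cumulative availability as the image under the fixed summation map. Your alternative direct computation with $\sum_k\mathbf M_k$ and Proposition~\ref{prop:linear_covariance} is a valid consistency check, but the proof does not need it.
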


\begin{proof}
The identity $A_k=\mathbf a\Pm^k\mathbf e_U$ shows that the vector $(A_0,\ldots,A_m)^\top$ is the Markov curve \eqref{eq:general_curve} with $\mathbf c=\mathbf e_U$. Proposition \ref{prop:markov_curves} gives the joint limit of all ordinates. The cumulative availability is the image of this vector under the linear map $(x_0,\ldots,x_m)\mapsto\sum_{k=0}^m x_k$, and therefore its limiting variable is the corresponding sum of the components of $\mathbf Z_A$.
\end{proof}

\begin{proposition}
\label{prop:reliability_failure}
Let $\Sm_{\pv_{UU}}$ and $\Sm_{\pv_{DD}}$ be the covariance submatrices of $\Sm_{\pv}$ corresponding respectively to $\vecc(\Wm_{\!\Pm_{UU}}^\top)$ and $\vecc(\Wm_{\!\Pm_{DD}}^\top)$. For $k\geq0$,
\begin{equation}
\label{eq:reliability_limit}
\sqrt n\left\{\widehat R_k(n)-R_k\right\}
\xrightarrow[n\rightarrow\infty]{d}
\mathbf a_U\Wm_{\!\Pm_{UU}^k}\mathbf 1_r,
\end{equation}
where $\Wm_{\!\Pm_{UU}^0}=\mathbf 0$. If
\begin{equation*}
\mathbf M^{UU}_k=
\sum_{i=0}^{k-1}\Pm_{UU}^i\otimes(\Pm_{UU}^{k-1-i})^\top,
\qquad k\geq1,
\end{equation*}
with $\mathbf M^{UU}_0=\mathbf 0_{r^2\times r^2}$, then
\begin{equation}
\label{eq:reliability_var}
\VV(\mathbf a_U\Wm_{\!\Pm_{UU}^k}\mathbf 1_r)=
(\mathbf a_U\otimes\mathbf 1_r^\top)
\mathbf M^{UU}_k\Sm_{\pv_{UU}}(\mathbf M^{UU}_k)^\top
(\mathbf a_U^\top\otimes\mathbf 1_r).
\end{equation}
For $k\geq1$,
\begin{equation}
\label{eq:failure_density_limit}
\sqrt n\left\{\widehat f_k(n)-f_k\right\}
\xrightarrow[n\rightarrow\infty]{d}
\mathbf a_U\Big(\Wm_{\!\Pm_{UU}^{k-1}}\Pm_{UD}
+\Pm_{UU}^{k-1}\Wm_{\!\Pm_{UD}}\Big)\mathbf 1_{s-r}.
\end{equation}
Furthermore,
\begin{equation}
\label{eq:down_survival_limit}
\sqrt n\left\{\widehat Q_k(n)-Q_k\right\}
\xrightarrow[n\rightarrow\infty]{d}
\mathbf a_D\Wm_{\!\Pm_{DD}^k}\mathbf 1_{s-r},
\end{equation}
and the maintainability $M_k=1-Q_k$ has limiting variable
\begin{equation*}
-\mathbf a_D\Wm_{\!\Pm_{DD}^k}\mathbf 1_{s-r}.
\end{equation*}
\end{proposition}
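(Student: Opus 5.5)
Each quantity in the proposition is a polynomial in $\Pm$ obtained by fixed left and right multiplications with selection matrices and fixed vectors. So the plan is to apply Theorem~\ref{theo:matrix_space_delta}(i) to the corresponding polynomial map on $\mathbb R^{s\times s}$ and read off the derivative via Propositions~\ref{prop:powers} and~\ref{prop:product_inverse_rules}.

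\textbf{Reliability.} I will define $\Phi(\mathbf M)=\mathbf a_U(\mathbf E_U^\top\mathbf M\mathbf E_U)^k\mathbf 1_r$. This is the composition of the linear map $\mathbf M\mapsto\mathbf E_U^\top\mathbf M\mathbf E_U$ with the $k$-th power map on $\mathbb R^{r\times r}$. By the chain rule and \eqref{eq:derivative_power},
\[
\Phi'_{\Pm}(\mathbf H)=\mathbf a_U\sum_{i=0}^{k-1}\Pm_{UU}^i\mathbf H_{UU}\Pm_{UU}^{k-1-i}\mathbf 1_r,
\qquad \mathbf H_{UU}=\mathbf E_U^\top\mathbf H\mathbf E_U .
\]
Inserting $\Wm_{\!\Pm}$ and writing $\Wm_{\!\Pm_{UU}}=\mathbf E_U^\top\Wm_{\!\Pm}\mathbf E_U$ gives \eqref{eq:reliability_limit}, with $\Wm_{\!\Pm_{UU}^k}$ meaning this sum. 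The case $k=0$ is deterministic, which matches the convention.

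For the variance, I note that $\vecc(\Wm_{\!\Pm_{UU}}^\top)=(\mathbf E_U^\top\otimes\mathbf E_U^\top)\Wm_{\!\pv}$. This is a coordinate subvector of $\Wm_{\!\pv}$, so its covariance is exactly the principal submatrix $\Sm_{\pv_{UU}}$ of $\Sm_{\pv}$. Applying Lemma~\ref{lem: vec_propereties} term by term, exactly as in the proof of Proposition~\ref{prop:powers}, gives $\vecc(\Wm_{\!\Pm_{UU}^k}^\top)=\mathbf M_k^{UU}\vecc(\Wm_{\!\Pm_{UU}}^\top)$. The scalar identity in Proposition~\ref{prop:linear_covariance} with $\mathbf a_U$ and $\mathbf c=\mathbf 1_r$ then yields \eqref{eq:reliability_var}.

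\textbf{Failure density.} Here $f_k=\mathbf a_U\Pm_{UU}^{k-1}\Pm_{UD}\mathbf 1_{s-r}$ is the product of two differentiable maps. The product rule of Proposition~\ref{prop:product_inverse_rules} gives the derivative
\[
\mathbf a_U\bigl(\{\text{derivative of }\Pm_{UU}^{k-1}\}(\mathbf H)\,\Pm_{UD}+\Pm_{UU}^{k-1}\mathbf H_{UD}\bigr)\mathbf 1_{s-r},
\qquad \mathbf H_{UD}=\mathbf E_U^\top\mathbf H\mathbf E_D .
\]
Substituting $\Wm_{\!\Pm}$, with $\Wm_{\!\Pm_{UD}}=\mathbf E_U^\top\Wm_{\!\Pm}\mathbf E_D$, gives \eqref{eq:failure_density_limit}. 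For $k=1$ the first term vanishes by the convention $\Wm_{\!\Pm^0}=\mathbf 0$.

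\textbf{Down-state survival and maintainability.} The argument for $Q_k$ is identical, with $D$ in place of $U$. Since $M_k=1-Q_k$ is an affine image of $Q_k$, its limiting variable is the negative of that for $Q_k$, by the continuous mapping theorem.

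\textbf{Main point to check.} The only delicate step is that the limiting Gaussian object for the block $\Pm_{UU}$ is the restriction of $\Wm_{\!\Pm}$. The block need not be stochastic and its rows need not sum to zero, so Theorem~\ref{theo:classical_mle_clt} cannot be invoked for a "sub-chain". This is harmless, because the functional is defined on all of $\mathbb R^{s\times s}$ and Theorem~\ref{theo:matrix_space_delta} only requires Fréchet differentiability there. The covariance-block identification is then immediate from the selection-matrix representation.
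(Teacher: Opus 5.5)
Your proposal is correct and follows the paper's route: each characteristic is written as a scalar linear form of a block power, or of a product of blocks. It is then differentiated with the power formula and the product rule, and the variance is read off as the scalar linear-form covariance. Your explicit chain rule through the selection matrices, including $\vecc(\Wm_{\!\Pm_{UU}}^\top)=(\mathbf E_U^\top\otimes\mathbf E_U^\top)\Wm_{\!\pv}$, makes precise what the paper means by applying Proposition~\ref{prop:powers} ``to the restricted block'' (which is not stochastic and has no CLT of its own).
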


\begin{proof}
The reliability function is obtained by evolving the chain inside the up-state block before hitting $D$. Thus $R_k$ is the scalar linear form of the power $\Pm_{UU}^k$, and Proposition \ref{prop:powers}, applied to the restricted block, gives \eqref{eq:reliability_limit}. The variance formula is the corresponding scalar linear form covariance.

For the failure probability $f_k$, the functional is the product $\Pm_{UU}^{k-1}\Pm_{UD}$ between the survival part in $U$ and the final jump to $D$. Differentiating this product gives the two terms in \eqref{eq:failure_density_limit}. The down-state survival formula is the same argument applied to the down-state block $\Pm_{DD}$. Finally, maintainability is $1-Q_k$, so its first-order term is the negative of the first-order term of $Q_k$.
\end{proof}

\begin{proposition}
\label{prop:mean_times}
Let $\mathbf N_U=(\mathbf I_r-\Pm_{UU})^{-1}$ and $\mathbf N_D=(\mathbf I_{s-r}-\Pm_{DD})^{-1}$. Then
\begin{equation}
\label{eq:mttf_limit}
\sqrt n\left\{\widehat{\operatorname{MTTF}}(n)-\operatorname{MTTF}\right\}
\xrightarrow[n\rightarrow\infty]{d}
\mathbf a_U\mathbf N_U\Wm_{\!\Pm_{UU}}\mathbf N_U\mathbf 1_r,
\end{equation}
with variance
\begin{equation}\label{var:mttf}
\sigma^2_{\operatorname{MTTF}}=
\mathbf L_U\Sm_{\pv_{UU}}\mathbf L_U^\top,
\qquad
\mathbf L_U=
\mathbf a_U\mathbf N_U\otimes \mathbf 1_r^\top\mathbf N_U^\top.
\end{equation}
Similarly,
\begin{equation}
\label{eq:mttr_limit}
\sqrt n\left\{\widehat{\operatorname{MTTR}}(n)-\operatorname{MTTR}\right\}
\xrightarrow[n\rightarrow\infty]{d}
\mathbf a_D\mathbf N_D\Wm_{\!\Pm_{DD}}\mathbf N_D\mathbf 1_{s-r},
\end{equation}
with variance
\begin{equation}
\label{var:mttr}
\sigma^2_{\operatorname{MTTR}}=
\mathbf L_D\Sm_{\pv_{DD}}\mathbf L_D^\top,
\qquad
\mathbf L_D=
\mathbf a_D\mathbf N_D\otimes \mathbf 1_{s-r}^\top\mathbf N_D^\top.
\end{equation}
\end{proposition}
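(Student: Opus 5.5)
The plan is to read MTTF and MTTR as scalar linear forms of a resolvent and to apply Proposition \ref{prop:resolvent} with selection matrices. For the MTTF we take $\mathbf B=\mathbf E_U^\top\in\mathbb R^{r\times s}$ and $\mathbf C=\mathbf E_U\in\mathbb R^{s\times r}$, so that $\mathbf B\Pm\mathbf C=\Pm_{UU}$ and $\mathbf N_{\mathbf B,\mathbf C}(\Pm)=\mathbf N_U$. Then $\operatorname{MTTF}=\mathbf u\mathbf N_{\mathbf B,\mathbf C}(\Pm)\mathbf v$ with $\mathbf u=\mathbf a_U$ and $\mathbf v=\mathbf 1_r$. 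The hypothesis $\rho(\Pm_{UU})<1$ holds because $\Pm$ is irreducible and $U$ and $D$ are nonempty, as noted after \eqref{eq: MTTF}. It is worth checking this explicitly: every up state reaches $D$ with positive probability, so $\Pm_{UU}$ is substochastic with some strictly deficient row reachable from every state of $U$. Hence $\Pm_{UU}^k\to\mathbf 0$.

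On the open set $\{\mathbf M:\det(\mathbf I_r-\mathbf E_U^\top\mathbf M\mathbf E_U)\neq0\}$ the map $\mathbf M\mapsto\mathbf a_U(\mathbf I_r-\mathbf E_U^\top\mathbf M\mathbf E_U)^{-1}\mathbf 1_r$ is smooth. On stochastic matrices near $\Pm$ it coincides with the plug-in MTTF. Proposition \ref{prop:resolvent}, or directly Theorem \ref{theo:matrix_space_delta}(i) together with the inverse rule of Proposition \ref{prop:product_inverse_rules}, then gives the limit $\mathbf a_U\mathbf N_U\mathbf E_U^\top\Wm_{\!\Pm}\mathbf E_U\mathbf N_U\mathbf 1_r$. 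Since $\mathbf E_U^\top\Wm_{\!\Pm}\mathbf E_U=\Wm_{\!\Pm_{UU}}$, the upper-left block of the limiting Gaussian matrix, this is \eqref{eq:mttf_limit}.

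For the variance we write the limit as $\mathbf a_U\mathbf N_U\,\Wm_{\!\Pm_{UU}}\,\mathbf N_U\mathbf 1_r$, which has the form $\mathbf b\mathbf H\mathbf c$ with $\mathbf b=\mathbf a_U\mathbf N_U$ (a row vector) and $\mathbf c=\mathbf N_U\mathbf 1_r$. By Lemma \ref{lem: vec_propereties}, as in Proposition \ref{prop:linear_covariance},
\begin{equation*}
\mathbf b\mathbf H\mathbf c=(\mathbf b\otimes\mathbf c^\top)\vecc(\mathbf H^\top),
\end{equation*}
and $\mathbf c^\top=\mathbf 1_r^\top\mathbf N_U^\top$, which gives the multiplier $\mathbf L_U$. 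Here $\vecc(\Wm_{\!\Pm_{UU}}^\top)$ is the subvector of $\Wm_{\!\pv}$ indexed by the pairs $(i,j)\in U\times U$, in row-wise order. Its covariance is therefore the corresponding principal submatrix $\Sm_{\pv_{UU}}$ of \eqref{eq:asymptotic_cov}, and this yields \eqref{var:mttf}.

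The MTTR case is identical with $\mathbf E_D$ in place of $\mathbf E_U$ and $s-r$ in place of $r$, under the stated condition $\rho(\Pm_{DD})<1$; this condition again follows from irreducibility.

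The only delicate points are bookkeeping rather than analysis. First, the restriction to a block has to commute with row-wise vectorization, so that the $U\times U$ entries of $\Wm_{\!\pv}$ are extracted in the correct order. Second, the block $\Wm_{\!\Pm_{UU}}$ does not have zero row sums, but this is harmless because differentiability is used on the full matrix space, as in Theorem \ref{theo:matrix_space_delta}.
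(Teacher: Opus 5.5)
Your proposal is correct and follows the same route as the paper: the MTTF (and MTTR) is read as a scalar linear form of the restricted resolvent, Proposition~\ref{prop:resolvent} is applied with the selection matrices $\mathbf E_U^\top,\mathbf E_U$ (resp.\ $\mathbf E_D^\top,\mathbf E_D$), and the variance comes from the identity $\mathbf a_U\mathbf N_U\Wm_{\!\Pm_{UU}}\mathbf N_U\mathbf 1_r=\mathbf L_U\vecc(\Wm_{\!\Pm_{UU}}^\top)$. Your extra checks are valid and make explicit what the paper leaves implicit: that $\rho(\Pm_{UU})<1$ and $\rho(\Pm_{DD})<1$ follow from irreducibility, and that extracting the $U\times U$ block commutes with row-wise vectorization, so that $\Sm_{\pv_{UU}}$ is the correct principal submatrix.
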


\begin{proof}
The MTTF is a scalar linear form of the restricted resolvent $(\mathbf I_r-\Pm_{UU})^{-1}$. Proposition \ref{prop:resolvent}, with the selection matrices corresponding to the up-state block, gives
\begin{equation*}
\{(\mathbf I_r-\Pm_{UU})^{-1}\}'(\Wm_{\!\Pm_{UU}})
=
\mathbf N_U\Wm_{\!\Pm_{UU}}\mathbf N_U.
\end{equation*}
Multiplication by $\mathbf a_U$ and $\mathbf{1}_r$ gives \eqref{eq:mttf_limit}. The displayed variance follows from the identity
\begin{equation*}
\mathbf a_U\mathbf N_U\Wm_{\!\Pm_{UU}}\mathbf N_U\mathbf{1}_r
=
\mathbf L_U\vecc(\Wm_{\!\Pm_{UU}}^\top).
\end{equation*}
The MTTR formula is identical with the down-state block in place of the up-state block.
\end{proof}

\begin{proposition}
\label{prop:intensity_stationary_availability}
For $k\geq1$,
\begin{equation}
\label{eq:dtiht_limit}
\sqrt n\left\{\widehat r_k(n)-r_k\right\}
\xrightarrow[n\rightarrow\infty]{d}
\mathbf a\Big(\Wm_{\!\Pm^{k-1}}\mathbf{J}_U\Pm+
\Pm^{k-1}\mathbf{J}_U\Wm_{\!\Pm}\Big)\mathbf e_D,
\end{equation}
with the convention $\Wm_{\!\Pm^0}=\mathbf 0$. Moreover,
\begin{equation}
\label{eq:stationary_availability_limit}
\sqrt n\left\{\widehat A_\infty(n)-A_\infty\right\}
\xrightarrow[n\rightarrow\infty]{d}
\Wm_{\!\boldsymbol\pi}\mathbf e_U,
\end{equation}
where $\Wm_{\!\boldsymbol\pi}$ is given by \eqref{asympt_law_stationary}.
\end{proposition}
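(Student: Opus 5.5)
The plan is to treat both assertions as direct instances of Theorem~\ref{theo:matrix_space_delta}(i), using the product rule of Proposition~\ref{prop:product_inverse_rules} for the intensity and Proposition~\ref{prop:stationary_vector} for the stationary availability.

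For the intensity, I will regard $r_k$ as the scalar linear form $\mathbf a\,\Psi_k(\Pm)\,\mathbf e_D$ of the matrix-valued map
\begin{equation*}
\Psi_k(\mathbf M)=\mathbf M^{k-1}\mathbf J_U\mathbf M,
\qquad \mathbf M\in\mathbb R^{s\times s}.
\end{equation*}
This map is polynomial, hence Fr\'echet differentiable (indeed analytic) on all of $\mathbb R^{s\times s}$, so it is an admissible representation $\Phi$ in Theorem~\ref{theo:matrix_space_delta}. I write $\Psi_k$ as the product of $\Phi_{k-1}(\mathbf M)=\mathbf M^{k-1}$ and the linear map $\mathbf M\mapsto\mathbf J_U\mathbf M$. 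The product rule of Proposition~\ref{prop:product_inverse_rules} then gives
\begin{equation*}
\Psi'_{k,\Pm}(\mathbf H)=\Phi'_{k-1,\Pm}(\mathbf H)\mathbf J_U\Pm+\Pm^{k-1}\mathbf J_U\mathbf H .
\end{equation*}
Here $\Phi'_{k-1,\Pm}$ is given by \eqref{eq:derivative_power}.

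When $k=1$ the factor $\Pm^0=\mathbf I_s$ is constant, so its derivative vanishes. This is exactly the convention $\Wm_{\!\Pm^0}=\mathbf 0$. Inserting $\mathbf H=\Wm_{\!\Pm}$ and using the definition \eqref{eq:limit_power_matrix} of $\Wm_{\!\Pm^{k-1}}$ yields the displayed limit \eqref{eq:dtiht_limit}, after left multiplication by $\mathbf a$ and right multiplication by $\mathbf e_D$. These fixed multiplications commute with the delta method by linearity, as in Proposition~\ref{prop:linear_covariance}.

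For the stationary availability, I use $A_\infty=\boldsymbol\pi\mathbf e_U=\mathbf 1_s^\top(\mathbf I_s-\Pm+\mathbf A)^{-1}\mathbf e_U$. This is a fixed right multiplication of the stationary-vector functional. The matrix $\mathbf I_s-\mathbf M+\mathbf A$ is invertible on a neighbourhood $\mathcal U$ of $\Pm$, because the determinant is continuous and the matrix is non-singular at $\Pm$ by Proposition~\ref{lem:topology_classes}. On that neighbourhood the representation is differentiable and agrees with $\phi$ on stochastic matrices. Proposition~\ref{prop:stationary_vector} already gives $\sqrt n\{\widehat{\boldsymbol\pi}(n)-\boldsymbol\pi\}\to\Wm_{\!\boldsymbol\pi}$ in distribution. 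The continuous linear map $\mathbf x\mapsto\mathbf x\mathbf e_U$ and the continuous mapping theorem then give \eqref{eq:stationary_availability_limit}.

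I expect no real obstacle. The only care needed is in checking that each representation used is differentiable on an open neighbourhood of $\Pm$ and coincides with the stochastic functional there, which is what Theorem~\ref{theo:matrix_space_delta} requires. By part (iv) of that theorem, the choice of representation does not matter on $\mathcal T_{\Pm}$.
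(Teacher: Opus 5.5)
Your proposal is correct and follows essentially the same route as the paper: you apply the product rule to $\Pm^{k-1}\mathbf J_U\Pm$, handle $k=1$ through the vanishing derivative of the constant factor $\Pm^0$, and obtain the stationary availability as a fixed linear image of the limit in Proposition~\ref{prop:stationary_vector}. Your extra checks, namely that the representations are differentiable on an open neighbourhood and the appeal to part~(iv) of Theorem~\ref{theo:matrix_space_delta}, are harmless additions to the paper's shorter argument.
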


\begin{proof}
The intensity $r_k$ is the scalar linear form of the product $\Pm^{k-1}\mathbf{J}_U\Pm$. The first factor describes the distribution at time $k-1$, the matrix $\mathbf{J}_U$ keeps only paths that are in the up set at that time, and the last factor gives the jump to $D$. Differentiating the product gives a contribution from the first factor and a contribution from the last factor, namely the two terms in \eqref{eq:dtiht_limit}. When $k=1$, the factor $\Pm^0$ is deterministic and its derivative is zero, so the convention $\Wm_{\!\Pm^0}=0$ gives the same formula. The stationary availability is the scalar linear form of the stationary vector; Proposition \ref{prop:stationary_vector} gives \eqref{eq:stationary_availability_limit}.
\end{proof}


\section{Numerical examples}
\label{sec: numerical}

In this section, we present numerical results illustrating the computational form of the preceding covariance formulas and the use of the first-order and curvature-refined approximations. All numerical values in this section are computed for the same transition matrix, and the Monte Carlo experiments are used only to compare the distribution of the plug-in estimators with the approximations obtained from the Gaussian matrix calculus. The calculations were made in R (version 4.0.1). The PC used is a DELL Precision Mobile 3591, with memory 32 GB and processor Intel(R) Core(TM) Ultra 7 165H (1.40 GHz).

\subsection{Computational comparison of covariance representations}

For $k\geq1$, consider the row-wise vectorization of $\sqrt n\{\widehat\Pm^k(n)-\Pm^k\}$. Its covariance matrix is
\begin{equation}
\label{cov:matrix}
\Sm_{\pv^{(k)}}=
\mathbf M_k\Sm_{\pv}\mathbf M_k^\top.
\end{equation}
The matrix $\mathbf M_k$ is obtained once from \eqref{Mk}. In the element-wise approach, the entries of the Jacobian of the map $\Pm\mapsto\Pm^k$ are computed by
\begin{equation}
\label{eq: covariance_Fn}
\frac{\partial p^{(k)}_{ij}}{\partial p_{uv}}=
\delta_{iu}p^{k-1}_{vj}+
\sum_{m=1}^{k-2}p^{m}_{iu}p^{k-1-m}_{vj}
+p^{k-1}_{iu}\delta_{vj},
\end{equation}
for $i,j,u,v\in S$. The covariance is then obtained by combining this Jacobian with $\Sm_{\pv}$.

We generate stochastic matrices corresponding to different numbers of states using the random stochastic-matrix generator of \cite{RMAT} and, for each matrix, compute the covariance matrix of $\sqrt n\left\{\widehat\pv^{(k)}(n)-\pv^{(k)}\right\}$ for $k=2$ by the element-wise and matrix-based approaches. For each fixed number of states $s$, ten stochastic matrices are generated and the mean computational time is recorded.

\begin{figure}[!htbp]
\centering
\includegraphics[scale=0.65]{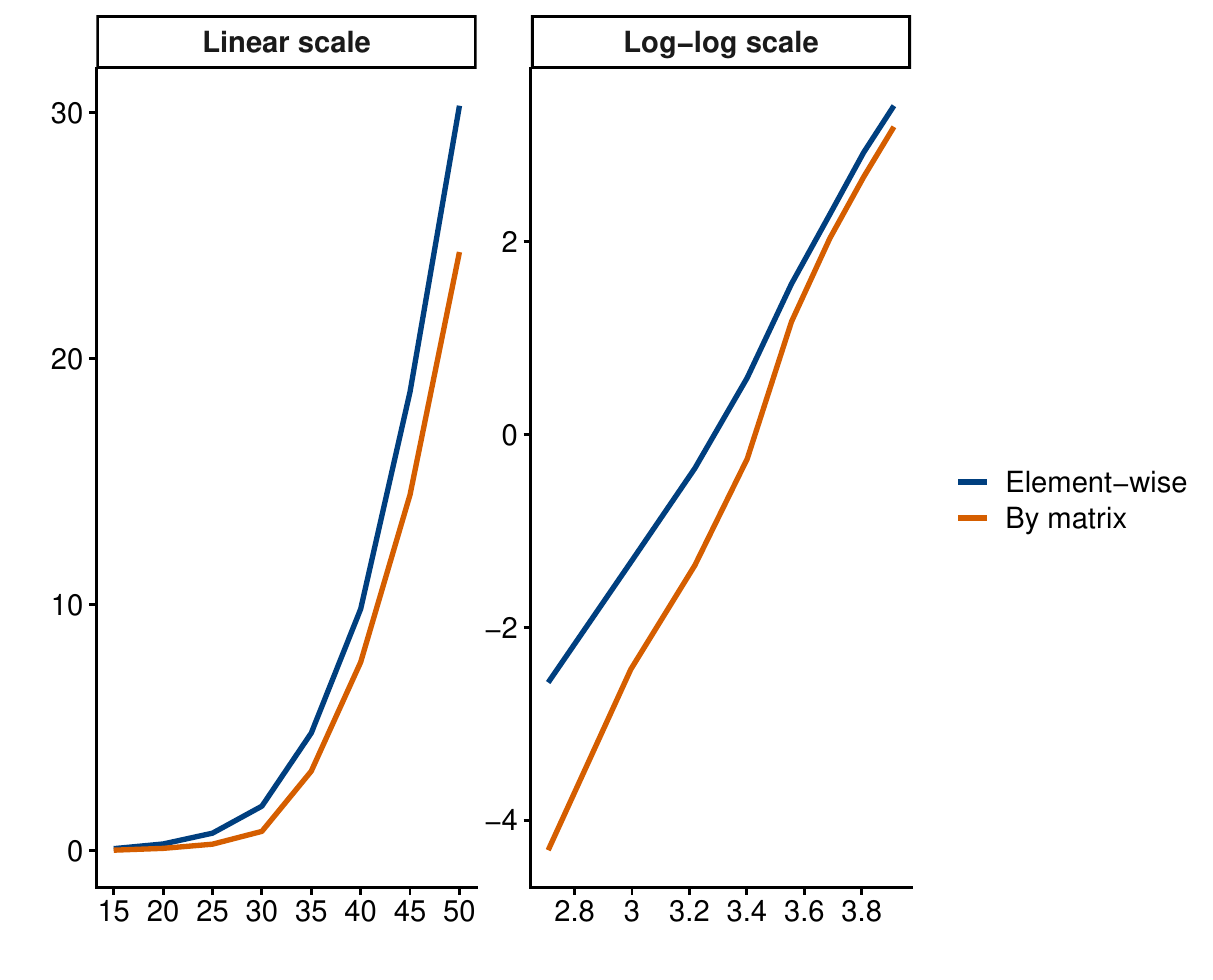}
\caption{Computation times versus number of states.}
\label{comput_times}
\end{figure}

According to the left panel of Fig.\ref{comput_times}, the element-wise approach is more time-consuming than the matrix-based approach, and the computational gap becomes more pronounced as the dimension of the state space increases. In the right panel of Fig.\ref{comput_times}, computation times are displayed on a logarithmic scale. This comparison concerns the construction of the covariance matrix itself; it is independent of the subsequent scalar, vector or matrix functional to which the covariance is applied.

\subsection{Pointwise inference for availability}

Consider a three-state Markov chain with initial law $\mathbf a$ and transition matrix $\Pm$ given by
\begin{equation*}
\mathbf a=
\begin{pmatrix}
0.5&0.5&0
\end{pmatrix},
\qquad
\Pm=
\begin{pmatrix}
0.3&0.7&0\\
0.6&0.1&0.3\\
0.5&0&0.5
\end{pmatrix}.
\end{equation*}
The up states are $U=\{1,2\}$ and the down state is $D=\{3\}$. For this model the mean time to failure is
\begin{equation*}
\operatorname{MTTF}=6.9048.
\end{equation*}
The limiting availability is close to $0.7921$, and already for moderate $k$ the values of $A_k$ are close to this level.

We first compute the theoretical availability curve and compare it with the plug-in MLE obtained from a simulated trajectory. The pointwise confidence intervals in Fig.\ref{availability2} are based on \eqref{eq:availability_curve_limit}.

\begin{figure}[!htbp]
\centering
\includegraphics[scale=0.65]{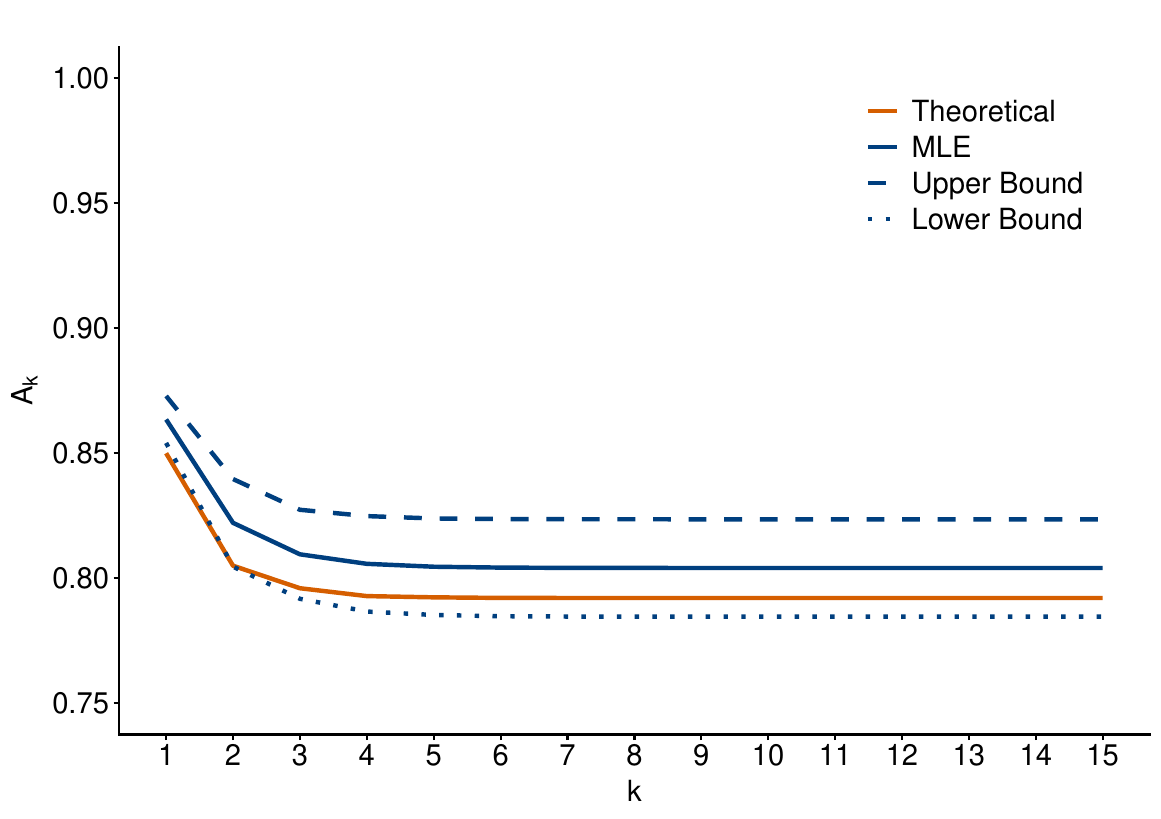}
\caption{Availability, MLE and 95\% confidence interval.}
\label{availability2}
\end{figure}

The following table gives representative numerical values for the availability estimator at three time instants, $k=5,10,20$. For $n=500,2000,10000$, the empirical quantities are computed from $40000$, $30000$ and $10000$ independent trajectories, respectively. The column ``Empirical interval'' is the Monte Carlo $95\%$ interval of the plug-in estimator $\widehat A_k(n)$. The column ``First-order interval'' is the interval
\begin{equation*}
A_k\pm 1.96\,\frac{\sigma_{A_k}}{\sqrt n},
\end{equation*}
where $\sigma_{A_k}^2$ is obtained from the matrix formula in Proposition \ref{prop:availability_curve}. The table is on the original availability scale.

\begin{table}[!htbp]
\centering
\small
\setlength{\tabcolsep}{4pt}
\caption{Availability values and first-order approximation on the original scale.}
\label{tab:availability_values}
\begin{tabular}{c|c|c|c|c|c}
$n$ & $k$ & $A_k$ & Empirical mean & Empirical interval & First-order interval\\ \hline
$500$ & $5$  & $0.7924$ & $0.7925$ & $[0.7423,0.8400]$ & $[0.7434,0.8413]$\\
$500$ & $10$ & $0.7921$ & $0.7920$ & $[0.7420,0.8398]$ & $[0.7429,0.8412]$\\
$500$ & $20$ & $0.7921$ & $0.7920$ & $[0.7420,0.8398]$ & $[0.7429,0.8412]$\\ \hline
$2000$ & $5$  & $0.7924$ & $0.7924$ & $[0.7679,0.8164]$ & $[0.7679,0.8168]$\\
$2000$ & $10$ & $0.7921$ & $0.7921$ & $[0.7675,0.8161]$ & $[0.7675,0.8166]$\\
$2000$ & $20$ & $0.7921$ & $0.7921$ & $[0.7675,0.8161]$ & $[0.7675,0.8166]$\\ \hline
$10000$ & $5$  & $0.7924$ & $0.7923$ & $[0.7812,0.8032]$ & $[0.7814,0.8033]$\\
$10000$ & $10$ & $0.7921$ & $0.7921$ & $[0.7809,0.8029]$ & $[0.7811,0.8031]$\\
$10000$ & $20$ & $0.7921$ & $0.7921$ & $[0.7809,0.8029]$ & $[0.7811,0.8031]$
\end{tabular}
\end{table}

Table \ref{tab:availability_values} shows that the matrix covariance representation gives the correct scale of the estimator without any coordinate-wise differentiation. The contraction of the intervals from $n=500$ to $n=10000$ also illustrates the usual $n^{-1/2}$ behavior. The values at $k=10$ and $k=20$ are close because the availability curve has essentially reached its stationary level for this transition matrix.

\subsection{Availability bands and MTTF first-order asymptotics}

We continue with the three-state Markov chain with initial law $\mathbf a$ and transition matrix $\Pm$ given by
\begin{equation*}
\mathbf a=
\begin{pmatrix}
  0.5 & 0.5 & 0
\end{pmatrix},
\qquad
\Pm=
\begin{pmatrix}
  0.3 & 0.7 & 0 \\
  0.6 & 0.1 & 0.3  \\
  0.5 & 0 & 0.5
\end{pmatrix}.
\end{equation*}
The up and down sets are $U=\{1,2\}$ and $D=\{3\}$, respectively. We write $\mathbf e_U=\mathbf 1_{r,s}$ for the column vector with entries equal to one on the up states and zero otherwise.

The first illustration concerns the joint distribution of a finite-dimensional Markov curve. Put
\begin{equation*}
\mathbf A_{1:20}=(A_1,\ldots,A_{20})^\top,
\qquad
A_k=\mathbf a\Pm^k\mathbf e_U.
\end{equation*}
For each $k$, the Fr\'echet derivative of $A_k$ at $\Pm$ is
\begin{equation*}
D A_k(\Pm)[\Hm]
=
\mathbf a
\left(\sum_{\ell=0}^{k-1}\Pm^\ell\Hm\Pm^{k-1-\ell}\right)
\mathbf e_U.
\end{equation*}
Thus the covariance matrix of the limiting vector
\begin{equation*}
\sqrt n\left\{\widehat{\mathbf A}_{1:20}(n)-\mathbf A_{1:20}\right\}
\end{equation*}
is obtained by evaluating the same matrix derivative simultaneously at the times $1,\ldots,20$. This is a useful point of the representation: the scalar asymptotic variance and the full covariance matrix of the curve are produced by the same linear map, without separate coordinate-wise differentiations.

This joint covariance gives simultaneous confidence bands. Let $\Gamma_A$ be the covariance matrix of the limiting curve, let $\sigma_k^2=(\Gamma_A)_{kk}$, and let $\mathbf G=(G_1,\ldots,G_{20})^\top$ be a centered Gaussian vector with covariance matrix $\Gamma_A$. The simultaneous Gaussian critical value is the $0.95$-quantile of
\begin{equation*}
\max_{1\leq k\leq20}\left|G_k/\sigma_k\right|.
\end{equation*}
For the present model, the population value is $2.196$, whereas the Bonferroni critical value for twenty coordinates is $3.023$. For an observed trajectory, $\Gamma_A$ and the associated Gaussian quantile are replaced by their plug-in estimates, as described in Remark~\ref{rem:plug_in_inference}. Hence the feasible joint Gaussian band uses the estimated dependence structure of the curve and is substantially narrower than its Bonferroni counterpart.

\begin{table}[!htbp]
\centering
\small
\setlength{\tabcolsep}{4pt}
\caption{Critical values, simultaneous Gaussian coverage and average half-widths for the availability curve $A_1,\ldots,A_{20}$.}
\label{tab:curve_bands}
\begin{tabular}{c|c|c|c|c|c}
Band & Critical value & Coverage & $n=500$ & $n=2000$ & $n=10000$\\
\hline
Pointwise & $1.960$ & $0.914$ & $0.0481$ & $0.0241$ & $0.0108$\\
Joint Gaussian & $2.196$ & $0.950$ & $0.0539$ & $0.0270$ & $0.0121$\\
Bonferroni & $3.023$ & $0.995$ & $0.0742$ & $0.0371$ & $0.0166$
\end{tabular}
\end{table}

The column ``Coverage'' gives the simultaneous coverage under the limiting Gaussian law of the whole curve. The last three columns give the average half-width of the corresponding band on the original availability scale. Pointwise intervals have the smallest marginal width, but their simultaneous coverage is only $0.914$. The Bonferroni band is simultaneous but conservative. The joint Gaussian band attains the target simultaneous coverage and reduces the average half-width by about $27\%$ compared with Bonferroni, for each value of $n$.

Figure~\ref{fig:availability_curve_simultaneous} shows one trajectory of length $n=2000$. The bands are centered at the plug-in availability curve and are computed with the plug-in covariance matrix. The Bonferroni band is displayed for comparison. Both the joint Gaussian and Bonferroni bands are simultaneous for the same finite-dimensional curve, but the joint band is visibly narrower. The pointwise analogue is also displayed for reference.

\begin{figure}[!htbp]
\centering
\includegraphics[scale=0.65]{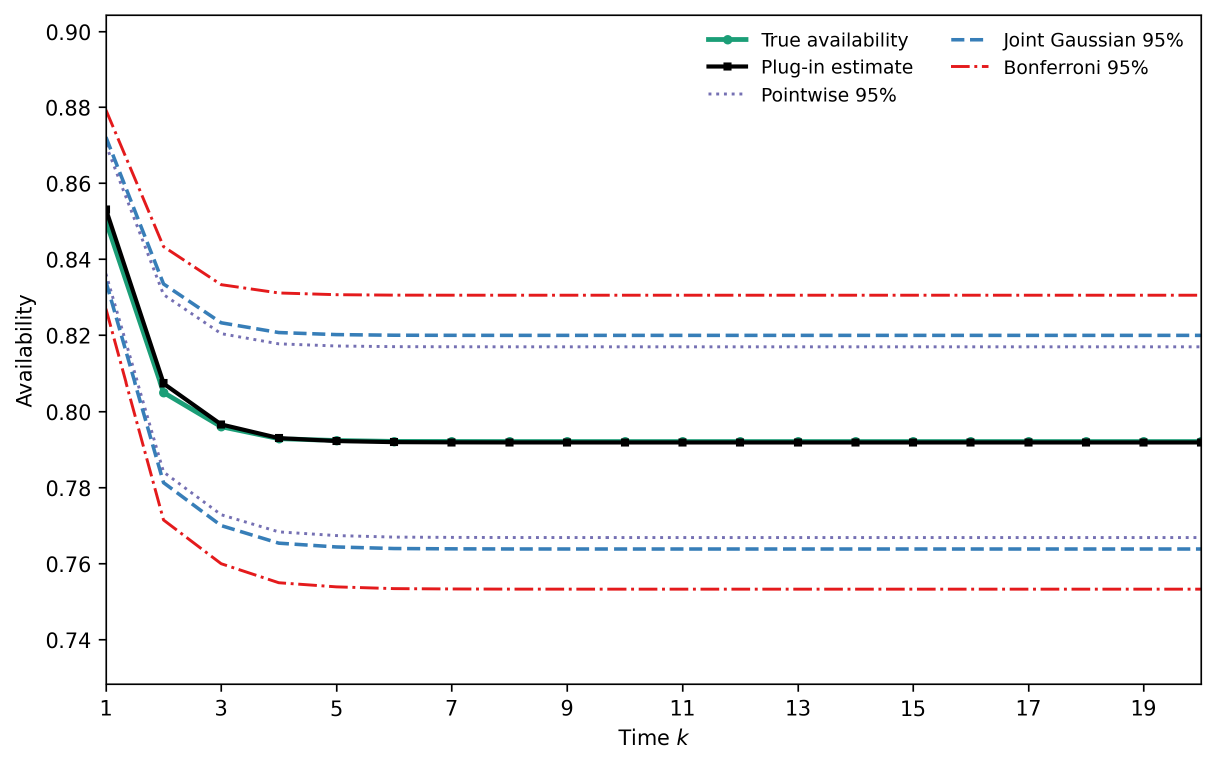}
\caption{Availability curve $A_1,\ldots,A_{20}$, plug-in estimator and confidence bands for $n=2000$.}
\label{fig:availability_curve_simultaneous}
\end{figure}

We next illustrate the first-order MTTF limit in \eqref{eq:mttf_limit}. For each simulated trajectory, the transition matrix is estimated from the transition counts and the plug-in estimator
\begin{equation*}
\widehat{\operatorname{MTTF}}(n)
=
\mathbf a_1
\left(\mathbf I_r-\widehat{\Pm}_{11}(n)\right)^{-1}
\mathbf 1_r
\end{equation*}
is computed. Figure~\ref{fig:mttf_normalized_error_clt} represents the empirical distribution of
\begin{equation*}
\sqrt n\left\{\widehat{\operatorname{MTTF}}(n)-\operatorname{MTTF}\right\}
\end{equation*}
for $n=50000$, based on $10^4$ independent trajectories. The superimposed curve is the limiting normal density with variance given by \eqref{var:mttf}; it is not fitted to the histogram. This normalized-error figure is centered near zero. On the original scale, the estimator is centered near the theoretical value
\begin{equation*}
\operatorname{MTTF}=6.9048,
\end{equation*}
and the finite-sample deformation induced by the inverse map is examined in the next subsection.

\begin{figure}[!htbp]
\centering
\includegraphics[scale=0.65]{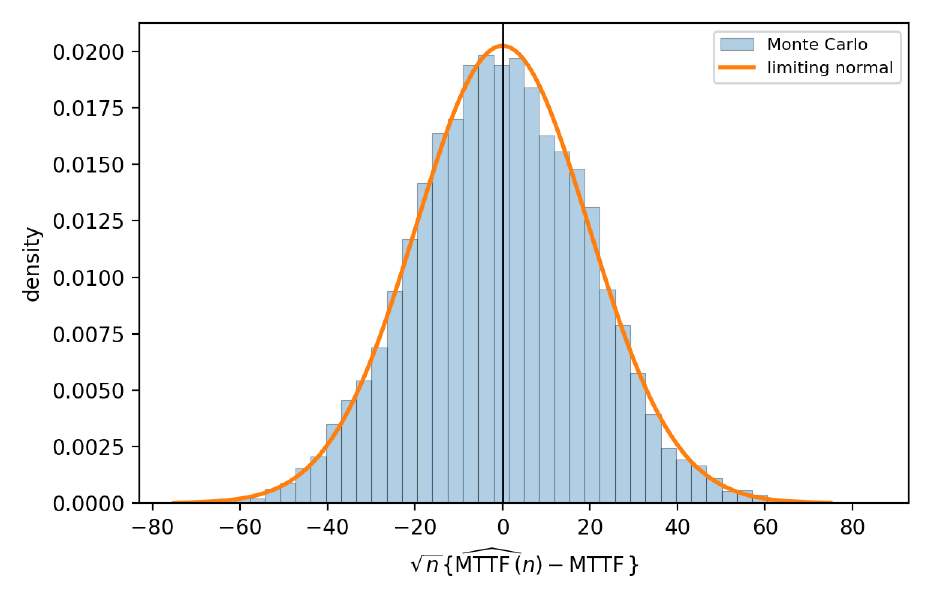}
\caption{Asymptotic distribution of the normalized MTTF error.}
\label{fig:mttf_normalized_error_clt}
\end{figure}

\subsection{Curvature-refined Gaussian approximation for the MTTF}

The same example is used to illustrate a second-order stochastic development on the original scale of the MTTF estimator. Let
\begin{equation*}
\operatorname{MTTF}(\Pm)
=
\mathbf a_1
\left(\mathbf I_r-\Pm_{11}\right)^{-1}
\mathbf 1_r
\end{equation*}
and put
\begin{equation*}
\mathbf N_U=
\left(\mathbf I_r-\Pm_{11}\right)^{-1}.
\end{equation*}
For a matrix direction $\Hm$, the first differential is
\begin{equation*}
L_{\Pm}(\Hm)
=
\mathbf a_1\mathbf N_U\Hm_{11}\mathbf N_U\mathbf 1_r.
\end{equation*}
The curvature term, that is, one half of the second differential evaluated at $(\Hm,\Hm)$, is
\begin{equation*}
Q_{\Pm}(\Hm)
=
\frac12\operatorname{MTTF}_{\Pm}^{(2)}(\Hm,\Hm)
=
\mathbf a_1\mathbf N_U\Hm_{11}\mathbf N_U
\Hm_{11}\mathbf N_U\mathbf 1_r.
\end{equation*}
Consequently, the first-order Gaussian approximation of the estimator is
\begin{equation*}
\operatorname{MTTF}(\Pm)
+
n^{-1/2}L_{\Pm}(\Wm_{\!\Pm}),
\end{equation*}
whereas the curvature-refined Gaussian approximation is
\begin{equation*}
\operatorname{MTTF}(\Pm)
+
n^{-1/2}L_{\Pm}(\Wm_{\!\Pm})
+
n^{-1}Q_{\Pm}(\Wm_{\!\Pm}).
\end{equation*}
The second-order term is not a correction of the transition-matrix estimator. It is the curvature term of the MTTF functional itself, and accounts for the asymmetry induced by the inverse map. For scalar confidence intervals based on the first-order theory, no Gaussian simulation is needed: one replaces $\Pm$ and $\Sm_{\pv}$ by their plug-in estimates in \eqref{var:mttf}. The simulations below are used only to compare the empirical distribution with the first-order and curvature-refined approximations.

Figure~\ref{fig:mttf_second_order_refinement} compares the empirical distribution of $\widehat{\operatorname{MTTF}}(n)$ with the first-order Gaussian and curvature-refined approximations, on the original MTTF scale. For $n=500$ and $n=2000$, the empirical curves are based on $5\times10^6$ independent Monte Carlo trajectories. For $n=10000$, a sample of $3\times10^5$ trajectories is used as a large-sample reference. The first-order approximation is normal and symmetric around $6.9048$. The curvature-refined approximation is obtained by simulating the limiting Gaussian matrix $\Wm_{\!\Pm}$ and adding the curvature term $n^{-1}Q_{\Pm}(\Wm_{\!\Pm})$. The curve labelled ``Second order'' in Figure~\ref{fig:mttf_second_order_refinement} denotes this curvature-refined approximation; no assertion of second-order distributional accuracy is made.

\begin{figure}[!htbp]
\centering
\includegraphics[width=\textwidth]{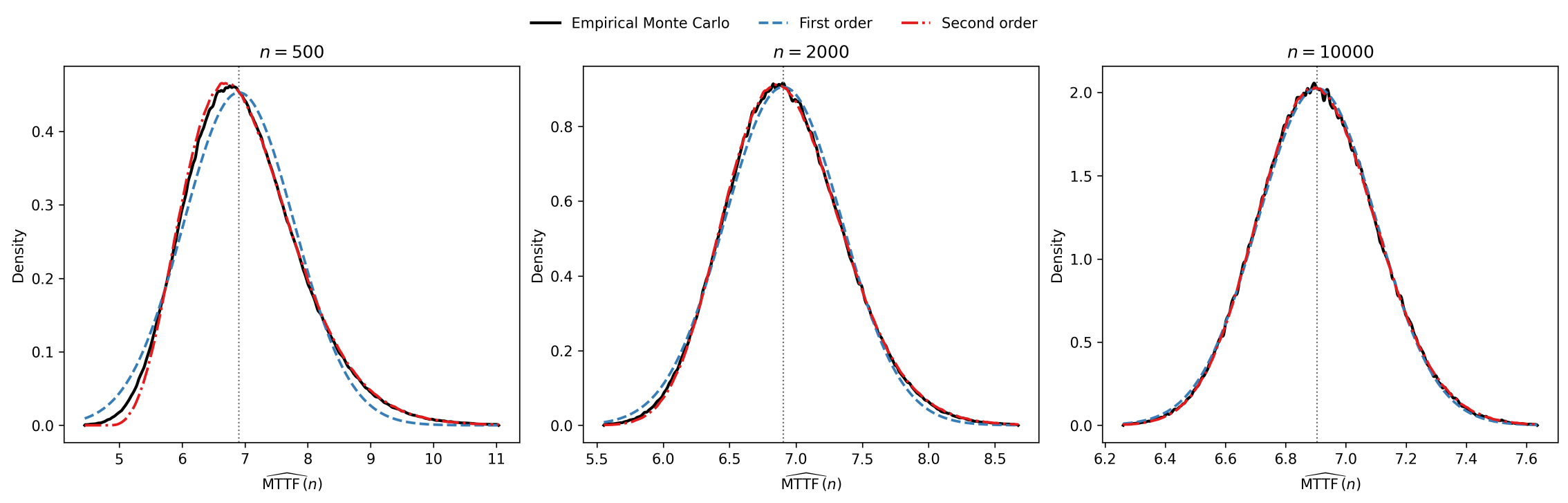}
\caption{Empirical distribution of $\widehat{\operatorname{MTTF}}(n)$ and first-order Gaussian and curvature-refined approximations, on the original MTTF scale.}
\label{fig:mttf_second_order_refinement}
\end{figure}

Table~\ref{tab:second_order_mttf} compares the empirical distribution of $\widehat{\operatorname{MTTF}}(n)$ with the approximations. The first-order entries are exact normal quantiles. The second-order entries are obtained from $5\times10^6$ simulations of the limiting Gaussian matrix. All entries are reported on the original MTTF scale. The curvature-refined approximation is no longer symmetric and captures the right-skewed deformation of the empirical law more closely for $n=500$ and $n=2000$.

\begin{table}[!htbp]
\centering
\small
\setlength{\tabcolsep}{3pt}
\caption{First-order Gaussian and curvature-refined approximations for the MTTF estimator on the original scale.}
\label{tab:second_order_mttf}
\begin{tabular}{c|ccc|ccc|ccc}
$n$ & \multicolumn{3}{c|}{Empirical} & \multicolumn{3}{c|}{First order} & \multicolumn{3}{c}{Curvature-refined}\\
& $2.5\%$ & $50\%$ & $97.5\%$ & $2.5\%$ & $50\%$ & $97.5\%$ & $2.5\%$ & $50\%$ & $97.5\%$\\
\hline
$500$ & $5.445$ & $6.903$ & $9.014$ & $5.178$ & $6.905$ & $8.632$ & $5.570$ & $6.911$ & $9.024$\\
$2000$ & $6.114$ & $6.904$ & $7.854$ & $6.041$ & $6.905$ & $7.768$ & $6.140$ & $6.906$ & $7.866$\\
$10000$ & $6.535$ & $6.904$ & $7.305$ & $6.519$ & $6.905$ & $7.291$ & $6.538$ & $6.905$ & $7.310$
\end{tabular}
\end{table}

For a direct comparison of the associated central $95\%$ confidence intervals, Table~\ref{tab:second_order_mttf_CI_differences} reports the signed discrepancies of the approximating endpoints from the empirical endpoints. Thus
\begin{equation*}
\Delta_L=q^{\rm app}_{0.025}-q^{\rm emp}_{0.025},
\qquad
\Delta_U=q^{\rm app}_{0.975}-q^{\rm emp}_{0.975},
\end{equation*}
and $\Delta_W$ is the corresponding difference of interval lengths. The first-order approximation places both endpoints too far to the left for $n=500$ and $n=2000$. The curvature-refined approximation corrects this displacement in the upper tail and gives a much closer upper endpoint. Its lower endpoint slightly overshoots for $n=500$, but the endpoint errors are already small for $n=2000$ and become negligible for $n=10000$.

\begin{table}[!htbp]
\centering
\caption{Differences between approximating and empirical central $95\%$ intervals for $\widehat{\operatorname{MTTF}}(n)$.}
\label{tab:second_order_mttf_CI_differences}
\begin{tabular}{c|c|rrr}
$n$ & Approximation & $\Delta_L$ & $\Delta_U$ & $\Delta_W$\\
\hline
$500$ & First order & $-0.267$ & $-0.382$ & $-0.115$\\
$500$ & Curvature-refined & $ 0.125$ & $ 0.010$ & $-0.115$\\
$2000$ & First order & $-0.073$ & $-0.086$ & $-0.013$\\
$2000$ & Curvature-refined & $ 0.026$ & $ 0.012$ & $-0.014$\\
$10000$ & First order & $-0.016$ & $-0.014$ & $ 0.002$\\
$10000$ & Curvature-refined & $ 0.003$ & $ 0.005$ & $ 0.002$
\end{tabular}
\end{table}

The improvement is most visible for $n=500$ and $n=2000$, where the nonlinearity of the inverse map still affects the upper tail. The curvature-refined approximation is not merely a translation of the normal curve: it reproduces the skewed deformation generated by the inverse functional. For $n=10000$, the first-order and curvature-refined approximations are close, as expected, and the empirical distribution is already very close to the limiting normal law.

\section{Discussion and perspectives}
\label{sec: discussion}
The abstract first-order transfer principle is established by the functional delta method. The contribution developed here is the finite Markov-chain calculus obtained after this transfer principle is combined with the Gaussian matrix representation of the transition-matrix MLE. A single matrix differential produces the limiting object in its natural form and, through row-wise vectorization, the covariance required for joint inference. The polynomial, inverse-type, stationary, reliability and long-run characteristics considered above are therefore treated by one construction rather than by separate coordinate systems. The finite-order developments further provide explicit curvature terms, as illustrated by the MTTF approximation.
The paper gives a unified matrix-level asymptotic calculus for plug-in MLEs in finite Markov models. The central object is the limiting Gaussian matrix of the estimated transition matrix. Once this object is identified, the asymptotic law of a Markov characteristic is obtained by differentiating the corresponding matrix functional and evaluating this derivative at the limiting Gaussian matrix. This is the common mechanism behind the formulas for powers, resolvents, stationary probabilities, curves and reliability indicators.

The practical advantage of the representation is that the limiting distribution is first obtained in the natural form of the characteristic itself: a matrix for a transition kernel, a vector for a curve, or a scalar for a reliability index. If a vector covariance matrix is needed, it is recovered afterwards by row-wise vectorization and Kronecker products. Thus the matrix and vector representations are not competing approaches; they are two forms of the same first-order expansion. The second-order development shows that the same calculus also identifies the curvature term of smooth functionals, providing a systematic approach to curvature-refined approximations and to the curvature component of order $n^{-1}$ mean corrections.

The explicit covariance formulas have immediate inferential consequences. Scalar characteristics lead to confidence intervals, finite-dimensional curves lead to simultaneous confidence bands or ellipsoidal regions, and linear restrictions lead to Wald-type tests. In this sense, the method provides a systematic approach from matrix representations of Markov-chain functionals to statistical inference for the corresponding plug-in estimators.

The finite-dimensional curve formulation is useful statistically. In applications one often estimates an entire time profile, such as an availability curve, a reliability curve or a cumulative occupation curve, rather than a single scalar characteristic. For any fixed time horizon, the present framework gives the joint Gaussian limit of the curve ordinates and their covariance matrix from the same operator formula. This viewpoint is also natural in settings where several sample paths are observed and the statistical object is a random curve.

A natural extension concerns semi-Markov chains. In that setting, the transition matrix is replaced by a semi-Markov kernel and the sojourn-time distributions enter the functionals; related reliability and hidden semi-Markov constructions may be found in \cite{votsi2014,votsi2015,DurandGaudoin2005,GamizPerez2023}. The same question then becomes whether the limiting Gaussian object can be organized so that differentiable functionals of the kernel admit matrix- or operator-level representations comparable to those derived here.
However, the semi-Markov kernel is infinite dimensional and the extension is not straightforward. In such settings, bootstrap procedures such as those in \cite{votsi2025} may also be useful for confidence intervals when closed covariance expressions become difficult to handle.

Another asymptotic regime consists of several independent trajectories of fixed or random finite length, with the number of trajectories tending to infinity; see \cite{trevezas2011} for related results in the semi-Markov context. This framework is common in survival analysis, biostatistics and longitudinal studies, where the observations are naturally viewed as sample paths or curves. In such a regime, the initial law is estimable jointly with the transition mechanism, and the joint asymptotic distribution of plug-in curve estimators becomes the relevant object.

\appendix
\section{Kronecker products and vectorization}
\label{app:kronecker}

We recall the elementary identities used in the paper; standard references include, for example, \cite[Chapter~4]{math_econ}. For matrices $\mathbf A\in\mathbb R^{m\times n}$ and $\mathbf B\in\mathbb R^{r\times q}$, the Kronecker product is the block matrix
\begin{equation*}
\mathbf A\otimes \mathbf B=
\begin{pmatrix}
\alpha_{11}\mathbf B &\rvline& \alpha_{12}\mathbf B &\rvline& \cdots &\rvline& \alpha_{1n}\mathbf B\\ \hline
\alpha_{21}\mathbf B &\rvline& \alpha_{22}\mathbf B &\rvline& \cdots &\rvline& \alpha_{2n}\mathbf B\\ \hline
\vdots &\rvline& \vdots &\rvline& \ddots &\rvline& \vdots\\ \hline
\alpha_{m1}\mathbf B &\rvline& \alpha_{m2}\mathbf B &\rvline& \cdots &\rvline& \alpha_{mn}\mathbf B
\end{pmatrix}.
\end{equation*}
It satisfies
\begin{equation*}
(\mathbf A\otimes\mathbf B)(\mathbf C\otimes\mathbf D)=(\mathbf{AC})\otimes(\mathbf{BD}),
\qquad
(\mathbf A\otimes\mathbf B)^\top=\mathbf A^\top\otimes\mathbf B^\top,
\end{equation*}
whenever the products are well defined.

\begin{lemma}
\label{lem: vec_propereties}
Let $\mathbf A\in\mathbb R^{p\times q}$, $\mathbf B\in\mathbb R^{q\times r}$ and $\mathbf C\in\mathbb R^{r\times m}$. Then
\begin{equation*}
\vecc(\mathbf A\mathbf B)=
(\mathbf I_r\otimes\mathbf A)\vecc(\mathbf B)
=(\mathbf B^\top\otimes\mathbf I_p)\vecc(\mathbf A),
\end{equation*}
and
\begin{equation*}
\vecc(\mathbf A\mathbf B\mathbf C)=
(\mathbf C^\top\otimes\mathbf A)\vecc(\mathbf B).
\end{equation*}
\end{lemma}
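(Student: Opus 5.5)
We will prove the three identities of Lemma~\ref{lem: vec_propereties} by a direct comparison of entries, using only the definition of $\vecc$ (stacking columns) and the block description of the Kronecker product. The first step is the column description of a product: the $k$-th column of $\mathbf A\mathbf B$ is $\mathbf A\mathbf b_k$, where $\mathbf b_k$ is the $k$-th column of $\mathbf B$. Stacking these columns for $k=1,\ldots,r$ gives a vector whose $k$-th block is $\mathbf A\mathbf b_k$. This is exactly the action of the block-diagonal matrix $\mathbf I_r\otimes\mathbf A$ on $\vecc(\mathbf B)=(\mathbf b_1^\top,\ldots,\mathbf b_r^\top)^\top$, so the first equality follows.

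For the second equality we write the same column as a linear combination of the columns $\mathbf a_j$ of $\mathbf A$:
\begin{equation*}
\mathbf A\mathbf b_k=\sum_{j=1}^q b_{jk}\mathbf a_j .
\end{equation*}
The $k$-th block row of $\mathbf B^\top\otimes\mathbf I_p$ is $(b_{1k}\mathbf I_p,\ldots,b_{qk}\mathbf I_p)$, since the $(k,j)$ entry of $\mathbf B^\top$ is $b_{jk}$. Applying this block row to $\vecc(\mathbf A)=(\mathbf a_1^\top,\ldots,\mathbf a_q^\top)^\top$ reproduces $\sum_j b_{jk}\mathbf a_j$. This gives $\vecc(\mathbf A\mathbf B)=(\mathbf B^\top\otimes\mathbf I_p)\vecc(\mathbf A)$.

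The triple-product identity then follows by composing the two rules, with dimensions chosen to match. First apply the second rule to the product $(\mathbf A\mathbf B)\mathbf C$:
\begin{equation*}
\vecc(\mathbf A\mathbf B\mathbf C)=(\mathbf C^\top\otimes\mathbf I_p)\vecc(\mathbf A\mathbf B).
\end{equation*}
Next apply the first rule to $\mathbf A\mathbf B$, which gives $\vecc(\mathbf A\mathbf B)=(\mathbf I_r\otimes\mathbf A)\vecc(\mathbf B)$. Combining the two with the mixed-product property recalled just before the lemma,
\begin{equation*}
(\mathbf C^\top\otimes\mathbf I_p)(\mathbf I_r\otimes\mathbf A)=\mathbf C^\top\otimes\mathbf A,
\end{equation*}
we obtain $\vecc(\mathbf A\mathbf B\mathbf C)=(\mathbf C^\top\otimes\mathbf A)\vecc(\mathbf B)$.

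There is no real obstacle here. The only point needing care is the block-row indexing of $\mathbf B^\top\otimes\mathbf I_p$ and checking that the dimensions in the mixed-product step agree: $\mathbf C^\top$ is $m\times r$, $\mathbf I_r$ is $r\times r$, and both $\mathbf I_p\mathbf A$ and $\mathbf A$ have size $p\times q$.
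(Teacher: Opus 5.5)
Your proof is correct. The column-by-column checks of the two two-factor identities are right, including the block row $(b_{1k}\mathbf I_p,\ldots,b_{qk}\mathbf I_p)$ of $\mathbf B^\top\otimes\mathbf I_p$. Composing them through the mixed-product rule, with the dimensions you list, gives $\vecc(\mathbf A\mathbf B\mathbf C)=(\mathbf C^\top\otimes\mathbf A)\vecc(\mathbf B)$.

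The paper gives no proof of this lemma; it only recalls the identities and refers to a standard matrix-calculus text. The usual textbook argument runs in the opposite direction. One writes $\mathbf B=\sum_j\mathbf b_j\mathbf e_j^\top$ and uses $\vecc(\mathbf x\mathbf y^\top)=\mathbf y\otimes\mathbf x$ to get $\vecc(\mathbf A\mathbf b_j\mathbf e_j^\top\mathbf C)=(\mathbf C^\top\mathbf e_j)\otimes(\mathbf A\mathbf b_j)=(\mathbf C^\top\otimes\mathbf A)(\mathbf e_j\otimes\mathbf b_j)$. Summing over $j$ gives the triple identity. The two-factor identities then follow as special cases from $\mathbf A\mathbf B=\mathbf A\mathbf B\mathbf I_r=\mathbf I_p\mathbf A\mathbf B$.

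Your version is more elementary, since it needs only the column description of a matrix product, and it fits the paper's layout because the mixed-product property is stated right before the lemma. The textbook version proves the general identity in one stroke and recovers the two-factor rules for free.
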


These identities are used only after the matrix differential has been found. Thus the limiting distribution is first obtained in the natural matrix, vector or scalar form of the Markov functional; the vector covariance matrix is then recovered by applying the appropriate vectorization identity.

\section*{Acknowledgements}
This work was partially supported by the ANR Project ``Hidden semi-Markov models: Inference, Control and Applications -- HSMM-INCA'' (ANR-21-CE40-0005).

\bibliographystyle{imsart-nameyear-issue}
\bibliography{sn-bibliography}

\end{document}